\newtheorem{definition}{Definition}
\newtheorem{theorem}{Theorem}
\newtheorem{lemma}{Lemma}
\newtheorem{remark}{Remark}
\newcommand{\minimize}[1]{\underset{{#1}}{\text{min}}}
\newcommand{\braceit}[1]{\left({#1}\right)}
\newcommand{\set}{\EuScript}
\newcommand{\norm}[1]{\lVert#1\rVert}
\newcommand{\normtwo}[2]{#1\lVert#2#1\rVert}
\newcommand\mydots{\hbox to 0.75em{.\hss.\hss.}}
\tikzset{myarr/.style={{Circle[black,length=4pt]}-{Circle[black,length=4pt]},shorten <=-2.5pt,shorten >=-2.5pt}}
\DeclareMathSymbol{\shortminus}{\mathbin}{AMSa}{"39}
\newcommand{\mysum}[1]{\underset{{{#1}}}{\textstyle\sum}}
\newcommand{\oset}[3][0ex]{%
  \mathrel{\mathop{#3}\limits^{
    \vbox to#1{\kern-2\ex@
    \hbox{$\scriptstyle#2$}\vss}}}}
\newcommand{\optimal}[1]{\oset{\scalebox{.6}{$\star$}}{#1}}
\newcommand{\newtext}[1]{\textcolor{black}{#1}}
\newcommand{\M}{{\mathcal{M}}}
\newcommand{\Mdp}{{\tilde{\mathcal{M}}}}
\newcommand{\RR}{\mathbb{R}}
\newcommand{\cD}{\mathcal{D}}
\newcommand{\cR}{\mathcal{R}}
\def\opt{\overset{\star}}
\definecolor{maincolor}{HTML}{032F99}
\definecolor{blue}{RGB}{31,64,122}
\definecolor{red}{HTML}{e05a87} 
\begin{document}

\title{Differentially Private Optimal Power Flow \\ for Distribution Grids}
\author{Vladimir~Dvorkin~Jr.,~\IEEEmembership{Student member,~IEEE,}
        Ferdinando~Fioretto,
        Pascal~Van~Hentenryck,~\IEEEmembership{Member,~IEEE,}
        Pierre~Pinson,~\IEEEmembership{Fellow,~IEEE,}
        and~Jalal~Kazempour,~\IEEEmembership{Senior Member,~IEEE}
        
\thanks{V. Dvorkin Jr., P. Pinson,  and J. Kazempour are with the Technical University of Denmark,  Kgs. Lyngby, Denmark. F. Fioretto is with the Syracuse University, Syracuse, NY, USA. P. Van Hentenryck is with the Georgia Institute of Technology, Atlanta, GA, USA.}
}
\maketitle

\begin{abstract}
Although distribution grid customers are obliged to share their consumption data with distribution system operators (DSOs), a possible leakage of this data is often disregarded in operational routines of DSOs. This paper introduces a privacy-preserving optimal power flow (OPF) mechanism for distribution grids that secures customer privacy from unauthorised access to OPF solutions, e.g., current and voltage measurements. The mechanism is based on the framework of \emph{differential privacy} that allows to control the participation risks of individuals in a dataset by applying a carefully calibrated noise to the output of a computation. Unlike existing private mechanisms, this mechanism does not apply the noise to the optimization parameters or its result. Instead, it optimizes OPF variables as affine functions of the random noise, which weakens the correlation between the grid loads and OPF variables. To ensure feasibility of the randomized OPF solution, the mechanism makes use of chance constraints enforced on the grid limits. The mechanism is further extended to control the optimality loss induced by the random noise, as well as the variance of OPF variables. The paper shows that the differentially private OPF solution does not leak customer loads up to specified parameters. 
\end{abstract}
\begin{IEEEkeywords}
Data obfuscation, optimization methods, privacy
\end{IEEEkeywords}
\IEEEpeerreviewmaketitle
\begingroup
\allowdisplaybreaks

\section{Introduction}
\IEEEPARstart{T}{he} increasing observability of distribution grids enables advanced operational practices for distribution system operators (DSOs). In particular, high-resolution voltage and current measurements available to DSOs allow for continuously steering the system operation towards an optimal power flow (OPF) solution \cite{dall2016optimal,bolognani2017fast,mieth2018data}. However, when collected, these measurements expose distribution grid customers to privacy breaches. Several studies have shown that the measurements of OPF variables can be used by an adversary to identify the type of appliances and load patterns of grid customers \cite{duarte2012non,cole1998data}. The public response to these privacy risks has been demonstrated by the Dutch Parliament's decision to thwart the deployment of smart meters until the privacy concerns are resolved \cite{erkin2013privacy}.

Although grid customers tend to entrust DSOs with their data in exchange for a reliable supply, their privacy rights are often disregarded in operational routines of DSOs. To resolve this issue, this paper augments the OPF computations with the preservation of customer privacy in the following sense. 

\begin{definition}[Customer privacy]\label{def_customer_privacy}
The right of grid customers to be secured from an unauthorized disclosure of sensitive information that can be inferred from the OPF solution. 
\end{definition}

To ensure this right, privacy needs to be rigorously quantified and guaranteed. Differential privacy (DP) \cite{dwork2006calibrating} is a strong privacy notion that quantifies and bounds privacy risks in computations involving sensitive datasets. By augmenting the computations with a carefully calibrated {\it random} noise, 
a DP {\it mechanism} guarantees that the noisy results do not disclose the attributes of individual items in a dataset. Chaudhuri {\it et al.} \cite{chaudhuri2011differentially} and Hsu {\it et al.} \cite{hsu2014privately} introduced several mechanisms to solve optimization models while preventing the recovery of the input data from optimization results. These mechanisms apply noise to either the parameters or the results of an optimization. The applied noise, however, fundamentally alters the optimization problem of interest. Therefore,  the direct application of these mechanisms to OPF problems has been limited. First, they may fail to provide a {\it feasible} solution for constrained optimization problems. To restore feasibility, they require an additional level of complexity such as the post-processing steps proposed in \cite{fioretto2019differential,mak2019privacy}. Second, although these mechanisms provide bounds on the worst-case performance, they do not consider the {\it optimality loss} as a control variable. As a result, they cannot provide appropriate trade-offs between the expected and the worst-case mechanism performances. Finally, the previously proposed mechanisms overlook the impact of the noise on the {\it variance} of the optimization results. Hence, their direct application to OPF problems may lead to undesired overloads of system components \cite{baghsorkhi2012impact}.

{\it Contributions:} To overcome these limitations, this paper proposes a novel differentially private OPF mechanism that does not add the noise to the optimization parameters or to the results. Instead, it obtains DP by optimizing OPF variables as {\it affine functions} of the noise, bypassing the above-mentioned theoretical drawbacks. More precisely, the paper makes the following contributions: 
\begin{enumerate}[leftmargin=0.45cm, label=\arabic{enumi}.]
    \item The proposed mechanism produces a random OPF solution that follows a Normal distribution and guarantees $(\varepsilon,\delta)-$differential privacy \cite{dwork2006calibrating}. Parameters $\varepsilon$ and $\delta$, respectively, bound the multiplicative and additive differences between the probability distributions of OPF solutions obtained on adjacent datasets (i.e., differing in at most one load value). 
    The mechanism is particularly suitable for protecting grid loads from unauthorized access to OPF solutions, as fine-tuned $(\varepsilon,\delta)$ values make randomized OPF solutions similar, irrespective of the used load dataset. 
    \item The mechanism enforces chance constraints on random OPF variables to guarantee solution feasibility for a given constraint satisfaction probability. This way, it does not require a post-processing step to restore OPF feasibility, as in \cite{fioretto2019differential} and \cite{mak2019privacy}. Since the OPF variables are affine in the Gaussian noise, the chance constraints are reformulated into computationally efficient second-order cone constraints.
    \item The mechanism enables the control of random OPF outcomes without weakening the DP guarantees. Using results from stochastic programming \cite{shapiro2009lectures}, the optimality loss induced by the noise is controlled using Conditional Value-at-Risk (CVaR) risk measure, enabling a trade-off between the expected and the worst-case performance. Furthermore, with a variance-aware control from \cite{bienstock2019variance}, the mechanism attains DP with a smaller variance of OPF variables. 
\end{enumerate}

\newtext{
{\it Broader Impact:} 
Distribution OPF proposals have been around for at least a decade, though their adoption in real operations is complicated by the need of utilizing load datasets, which raises significant privacy concerns by many regulators worldwide. The adoption of the proposed mechanism, in turn, extends standard OPF models to enable a privacy-cognizant utilization\footnote{\newtext{Note that privacy concerns a safe utilization of the data, not its storage, which falls within the field of cyber-security.}} of this data, thus facilitating the digitization of the energy sector. The mechanism treats the DSOs as trust-worthy parties and places them on the same ground with the digital service providers, e.g. Amazon, enabling the regulation and securing legal responsibility of the digitalized distribution grids under modern data protection and privacy standards, including the General Data Protection Rights (GDPR) in the European Union, the California Consumer Privacy Act (CCPA) and the New York Privacy Act (NYPA) in the United States. Moreover, the mechanism provides the means to hedge the financial risks of the DSOs by avoiding the cost incurred by privacy violations, such as legal costs, as it relies on a strong quantification of privacy and co-optimizes the joint cost of energy supply and privacy.
}

{\it Related Work:} Thanks to its strong privacy guarantees, DP has been recently applied to private OPF computations. In particular, the mechanism of Zhou {\it et al.} \cite{zhou2019differential} releases aggregated OPF statistics, e.g., aggregated load, while ensuring the privacy for individual loads, even if all but one loads are compromised. The proposals by Fioretto {\it et al.} \cite{fioretto2019differential} and Mak {\it et al.} \cite{mak2019privacy} provide a differentially private mechanism to release high-fidelity OPF datasets (e.g., load and network parameters) from real power systems while minimizing the risks of disclosing the actual system parameters. The mechanisms, however, are meant for the private release of aggregate statistics and input datasets and do not provide the OPF solution itself. 

Private OPF computations have also been studied in a decentralized and distributed setting. Dvorkin {\it et al.} \cite{dvorkin2019differentially} designed a distributed OPF algorithm with a differentially private exchange of coordination signals, hence preventing the leakage of the sensitive information contained in the algorithm subproblems. Han {\it et al.} \cite{han2016differentially} proposed a privacy-aware distributed coordination scheme for electrical vehicle charging. The privacy frameworks in \cite{dvorkin2019differentially} and \cite{han2016differentially}, however, are not suitable for centralized computations \newtext{and solely focus on the privacy leakage through the exchange of coordination signals. Moreover, to negate the privacy loss induced at every iteration, they require scaling the parameters of the random perturbation, thus involving larger optimality losses and poorer convergence. The centralized mechanisms proposed in this work, however, allow obtaining the private OPF solution in a single computation run.} In distribution systems, Zhang {\it et al.} \cite{zhang2017cost}, among other proposals reviewed in \cite{erkin2013privacy}, designed a privacy-aware optimization of behind-the-meter energy storage systems to prevent the leakage of consumption data from the smart meter readings. However, they disregard OPF feasibility of distribution systems, which has to be preserved in all circumstances. 

{\it Paper Organization:} Following the preliminaries in Section \ref{sec:preliminaries}, Section \ref{sec:goals} formalizes the privacy goals and provides an overview of the proposed solution. Section \ref{DP_OPF_mechansim} provides the formulation of the proposed privacy-preserving OPF mechanism and its properties, whereas Section \ref{sec:properties} presents its extensions. Section  \ref{sec:experiments} provides numerical experiments and Section \ref{sec_conc} concludes. The proofs are relegated to the appendix.

\section{Preliminaries}\label{sec:preliminaries}
\subsection{Optimal Power Flow Problem}

The paper considers a low-voltage radial distribution grid with
controllable distributed energy resources (DERs). A DSO is responsible for
controlling the DERs and supplying power from the high-voltage grid while meeting the technical limits of the grid. The grid is 
modeled as a graph  $\Gamma\braceit{\set{N},\set{L}}$, where
$\set{N}=\{0, 1, \ldots, n\}$ is the set of nodes and $\set{L}=\set{N}\setminus \{0\}$ is the set of lines connecting those nodes.
The root node, indexed by $0$, is a substation with a large capacity
and fixed voltage magnitude $v_0 = 1$. The radial topology, depicted in 
Fig.~\ref{grid_topology}, associates each node $i \in \set{N}$ with the sets $\set{U}_{i}$ and $\set{D}_{i}$ of, respectively, upstream and downstream nodes, as well as with the set $\set{R}_{i}$ of nodes on the path to the root node. 


Each node $i$ is characterized by its fixed active $d_{i}^{p}$ and reactive $d_{i}^{q}$ power load and by its voltage magnitude $v_{i}\in[\underline{v}_{i},\overline{v}_{i}]$. 
For modeling convenience, the voltage variables are substituted by $u_{i}=v_{i}^{2},\;\forall i\in\set{N}$.
A controllable DER sited at node $i$ outputs an amount of active 
$g_{i}^{p}\in[\underline{g}_{i}^{p},\overline{g}_{i}^{p}]$ and
reactive $g_{i}^{q}\in[\underline{g}_{i}^{q},\overline{g}_{i}^{q}]$
power. Its costs are linear with a cost coefficient $c_{i}$. To model the relation between the active and reactive DER power output, the constant power factor $\tan{\phi_{i}}$ is assumed for each node $i$. 
The active and reactive power flows, $f_{\ell}^{p}$ and $f_{\ell}^{q}$, $\forall \ell \in \set{L}$, respectively, are constrained by the apparent power limit $\overline{f}_{\ell}$, and each line is characterized by its resistance $r_{\ell}$ and reactance $x_{\ell}$. The deterministic OPF model is formulated as:
\label{model:det_opf}
\begin{subequations}\label{det_OPF}
\begin{align}
    \text{D-OPF}\colon\;&\minimize{g^{\dag},f^{\dag},u}\;\mysum{i\in\set{N}}c_{i}g_{i}^{p}\label{det_OPF_obj}\\
    \text{s.t.}
    \;\;&g_{0}^{\dag}=\mysum{i\in\set{D}_{0}}(d_{i}^{\dag}-g_{i}^{\dag}),\;u_{0}=1,\label{det_OPF_root}\\
    &f_{\ell}^{\dag}=d_{\ell}^{\dag}-g_{\ell}^{\dag} + \mysum{i\in\set{D}_{\ell}}(d_{i}^{\dag}-g_{i}^{\dag}), \;\forall \ell\in\set{L},\label{det_OPF_flow}\\
    &u_{i} = u_{0} - 2\mysum{\ell \in\set{R}_{i}}(f_{\ell}^{p}r_{\ell}+f_{\ell}^{q}x_{\ell}),\;\forall i\in\set{L},\label{det_OPF_voltage}\\
    &(f_{\ell}^{p})^2 + (f_{\ell}^{q})^2 \leqslant\overline{f}_{\ell}^2,\;\forall \ell\in\set{L},\label{det_OPF_flow_limit}\\
    &\underline{g}_{i}^{\dag} \leqslant g_{i}^{\dag}\leqslant \overline{g}_{i}^{\dag},\;\forall i\in\set{N},\label{det_OPF_gen_limit}\\
    &\underline{v}_{i}^2 \leqslant u_{i} \leqslant \overline{v}_{i}^2,\;\forall i\in\set{N}\setminus \{0\},\label{det_OPF_voltage_limit}
\end{align}
\end{subequations}
where superscript $\dag=\{p,q\}$ indexes active and reactive power. \newtext{The objective is to minimize the total operational cost subject to the OPF equations \eqref{det_OPF_root}--\eqref{det_OPF_voltage}, that balance the grid based on the \emph{LinDistFlow} AC power flow equations \cite[equations (9)]{baran1989optimal} for distribution grids, and grid limits \eqref{det_OPF_flow_limit}--\eqref{det_OPF_voltage_limit}. Equation \eqref{det_OPF_root} requires the total mismatch between power generation and loads in the distribution grid to be compensated for by the power from the substation at the root node. Equation \eqref{det_OPF_flow} requires the balance between the power flow along every edge $\ell$, power mismatch at the in-flow node $\ell$ as well as that at the downstream nodes. The last term in \eqref{det_OPF_flow} can be also rewritten as the sum of power flows in the adjacent downstream lines, but kept as it is in the interest of the subsequent derivations. Last, equation \eqref{det_OPF_voltage} models the voltage drop along the path from the root node to the node of interest.}  

\newtext{Although OPF equations (1b)-(1d) establish the affine relation between the OPF variables, which is necessary for the subsequent chance-constrained formulation, they neglect distribution grid losses. The losses, however, can be included in an affine manner using various linearization techniques, such as in \cite{roald2016optimization}, \cite{bernstein2018load} and \cite{mieth2019distribution} to mention but a few examples.}

\begin{figure}
\centering
\resizebox{0.45\textwidth}{!}{%
\includegraphics{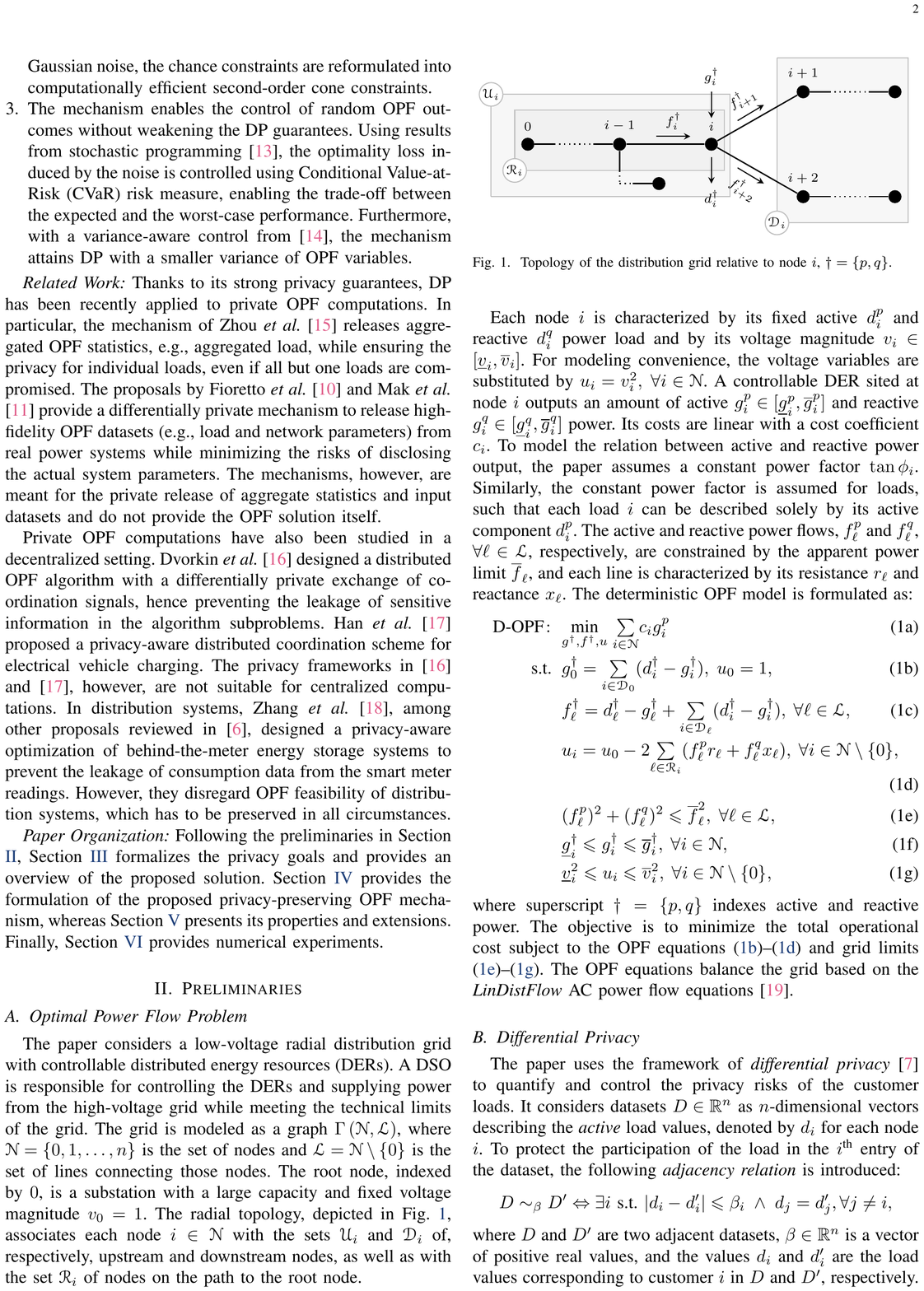}
}
\caption{Topology of the distribution grid relative to node $i$, $\dag=\{p,q\}$.}
\label{grid_topology}
\end{figure}

\subsection{Differential Privacy}
\label{sec:fifferential_privacy}

The paper uses the framework of \emph{differential privacy} \cite{dwork2006calibrating} to quantify and control the privacy risks of the customer loads.  It considers datasets $D \in \mathbb{R}^n$ as $n$-dimensional vectors describing the \emph{active} load values, denoted by $d_i$ for each node $i$. To protect the participation of the load in the $i^{\text{th}}$ entry of the dataset, the following \emph{adjacency relation} is introduced:
\begin{equation*} 
\label{eq:adj_rel} 
    D \sim_\beta D' \Leftrightarrow \exists i
    \textrm{~s.t.~} | d_i - {d}_{i}' | \leqslant \beta_i \;\newtext{\text{and}}\;
    d_j = {d}_{j}', \forall j \neq i,
\end{equation*} 
where $D$ and $D'$ are two adjacent datasets, $\beta \in \RR^n$ is a vector of positive real values, and values $d_i$ and $d_i'$ are the load values corresponding to customer $i$ in $D$ and $D'$, respectively.  The adjacency relates two load vectors that differ in at most one item, at position $i$, by a value not greater than $\beta_i$. 

If a mechanism satisfies the definition of differential privacy, it returns similar results on adjacent datasets in a probabilistic sense. This intuition is formalized in the following definition.

\begin{definition}[Differential Privacy]
  \label{eq:dp_def}
Given a value $\beta \in \RR^n_{+}$, a randomized mechanism $\Mdp \!:\! \cD \!\to\! \cR$ with domain $\cD$ and range $\cR$ is $(\varepsilon, \delta)$-differential private if, for any output $s \subseteq \cR$ and any two adjacent inputs $D \sim_\beta D' \in \RR^n$
\begin{equation*}
  \mathbb{P}[\Mdp(D) \in s] \leqslant e^\varepsilon \mathbb{P}[\Mdp(D') \in s] + \delta,
\end{equation*}
where $\mathbb{P}$ denotes the probability over runs of $\Mdp$. 
\end{definition}

\newtext{In the context of OPF problem \eqref{det_OPF}, domain $\mathcal{D}$ includes all feasible load datasets, mechanism $\mathcal{M}$ denotes the OPF problem itself, and $\tilde{\mathcal{M}}$ is its randomized counterpart, and range $\mathcal{R}$ denotes the feasible region of the OPF problem.}

The level of privacy is controlled by DP parameters $(\varepsilon,\delta)$. The former corresponds to the maximal multiplicative difference in distributions obtained by the mechanism on adjacent datasets, whereas the latter defines the maximal additive difference. Consequently, smaller values of $\varepsilon$ and $\delta$ provide stronger privacy protection. Definition \ref{eq:dp_def} extends the metric-based differential privacy introduced by Chatzikokolakis {\it et al.} \cite{chatzikokolakis2013broadening} to control of \emph{individual} privacy risks. 

\newtext{If a mechanism satisfies Definition \ref{eq:dp_def}, it features two important properties. First, by acting on adjacent datasets $D$ and $D^{\prime}$, it provides privacy for each item $i$ {\it irrespective} of the properties of all remaining items in a dataset. Second, it is immune to the so-called side attacks, i.e., it ensures that even if an attacker acquires the data of all other users but $i$, when accessing the output $\tilde{\mathcal{M}}(D)$ of the differential private mechanism, it will not be able to infer the load value of user $i$ up to differential privacy bounds $\varepsilon$ and $\delta$ \cite{dwork2014algorithmic}}.

The differentially private design of any mechanism is obtained by means of randomization using, among others, \newtext{Laplace or Gaussian noise for numerical queries and exponential noise for the so-called non-numerical events \cite{dwork2014algorithmic}}. The DP requirements for an optimization problem are achieved by introducing a calibrated noise to the input data \cite{hsu2014privately} or to the output or objective function of the mechanism itself \cite{chaudhuri2011differentially}. 
Regardless of the strategy adopted to attain DP, the amount of noise to inject depends on the mechanism \emph{sensitivity}. 
In particular, the $L_2-$sensitivity of a deterministic mechanism $\M$ on $\beta$-adjacent datasets, denoted by $\Delta^\beta$, is defined as:
\[
    \Delta^\beta = \max_{D \sim_\beta D'} 
    \left\|  {\cal M}(D) - {\cal M}(D')\right\|_2.
\]
\newtext{This work employs the Gaussian mechanism, because the Gaussian noise allows for the exact analytic reformulation of chance constraints into tractable second-order cone constraints.} 
\begin{theorem}[Gaussian mechanism \cite{dwork2014algorithmic}]
\normalfont\label{def_gaus_mech} 
Let ${\cal M}$ be a mechanism of interest that maps datasets $D$ to $\RR^n$\newtext{, and let $\Delta^\beta$ be its $L_2-$sensitivity}. For  $\varepsilon\in(0,1)$ and $\gamma^{2}>2\ln(\frac{1.25}{\delta})$, the Gaussian mechanism that outputs ${\cal \tilde{M}}(D) ={\cal M}(D) + \xi$, with $\xi$ noise  drawn from the Normal distribution with $0$ mean and standard deviation $\sigma \geqslant \gamma\frac{\Delta^{\beta}}{\varepsilon}$ 
is $(\varepsilon,\delta)$-differentially private. 
\end{theorem}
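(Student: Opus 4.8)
The plan is to follow the classical privacy-loss argument for the Gaussian mechanism. First I would fix two $\beta$-adjacent datasets $D \sim_\beta D'$, set $v = \M(D) - \M(D') \in \RR^n$, and observe that by the definition of the $L_2$-sensitivity, $\norm{v}_2 \leqslant \Delta^\beta$. Because the noise $\xi$ is drawn from a spherically symmetric Normal distribution, I would rotate coordinates so that $v$ is aligned with a single axis; this reduces the comparison of the output densities of $\Mdp(D)$ and $\Mdp(D')$ to a one-dimensional Gaussian computation, since the two densities differ only along the direction of $v$.

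Next I would introduce the privacy loss random variable $L(x) = \ln\!\big(p_D(x)/p_{D'}(x)\big)$, where $p_D$ and $p_{D'}$ are the densities of $\Mdp(D)$ and $\Mdp(D')$. Expanding the Gaussian densities and evaluating at $x = \M(D) + \xi$ gives
\[
    L = \frac{1}{2\sigma^2}\big(2\langle \xi, v\rangle + \norm{v}_2^2\big),
\]
so that, under $x \sim \Mdp(D)$, $L$ is itself Gaussian with mean $\norm{v}_2^2/(2\sigma^2)$ and variance $\norm{v}_2^2/\sigma^2$. The key reduction, standard in the DP literature, is that $(\varepsilon,\delta)$-differential privacy follows once $\mathbb{P}_{x \sim \Mdp(D)}\!\big[\,L(x) > \varepsilon\,\big] \leqslant \delta$: for any measurable $s \subseteq \cR$ one splits $\mathbb{P}[\Mdp(D)\in s]$ into the contribution from $\{x : L(x) \leqslant \varepsilon\}$, on which the likelihood ratio is at most $e^\varepsilon$ so this part is bounded by $e^\varepsilon\,\mathbb{P}[\Mdp(D')\in s]$, plus the remaining contribution, which is at most $\mathbb{P}[L > \varepsilon] \leqslant \delta$. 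The reverse inequality in Definition~\ref{eq:dp_def} is obtained by swapping the roles of $D$ and $D'$.

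It then remains to verify the tail bound. Using $\norm{v}_2 \leqslant \Delta^\beta$, the mean and variance of $L$ are maximized at $\mu := (\Delta^\beta)^2/(2\sigma^2)$, and $\mathbb{P}[L > \varepsilon] = \mathbb{P}\big[\mathcal{N}(0,1) > (\varepsilon - \mu)/\sqrt{2\mu}\,\big]$. Substituting $\sigma \geqslant \gamma \Delta^\beta/\varepsilon$ yields $\mu \leqslant \varepsilon^2/(2\gamma^2)$, and a standard Gaussian tail inequality of the form $\mathbb{P}[\mathcal{N}(0,1) > t] \leqslant \tfrac12 e^{-t^2/2}$ for $t>0$, combined with the hypotheses $\gamma^2 > 2\ln(1.25/\delta)$ and $\varepsilon \in (0,1)$, reduces the claim to an elementary inequality in $\varepsilon,\delta,\gamma$. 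I expect this last arithmetic step — expanding $(\varepsilon-\mu)^2/(2\mu)$ and using $\varepsilon < 1$ to absorb the lower-order terms so that exactly the constant $1.25$ and the threshold $2\ln(1.25/\delta)$ emerge — to be the only delicate part; the monotonicity reductions (adjacency, rotation, worst case over $\norm{v}_2$) and the density computation are routine. Since the statement is quoted from \cite{dwork2014algorithmic}, in practice I would present the reduction steps in detail and cite that reference for the final tail estimate.
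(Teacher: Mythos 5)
The paper does not prove this theorem at all---it is imported verbatim from \cite{dwork2014algorithmic}---so the only comparison is with that canonical argument, and your sketch follows it faithfully: the privacy-loss variable $L=\tfrac{1}{2\sigma^2}\big(2\langle\xi,v\rangle+\|v\|_2^2\big)$ is indeed $\mathcal{N}\big(\|v\|_2^2/(2\sigma^2),\,\|v\|_2^2/\sigma^2\big)$ under $\tilde{\mathcal{M}}(D)$, the reduction of $(\varepsilon,\delta)$-DP to $\mathbb{P}[L>\varepsilon]\leqslant\delta$ via splitting on $\{L\leqslant\varepsilon\}$ is the standard one, and taking the worst case $\|v\|_2=\Delta^\beta$ is justified because the tail is monotone in $\|v\|_2$. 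The arithmetic you defer also closes: with $\sigma=\gamma\Delta^\beta/\varepsilon$ one gets $t=(\varepsilon-\mu)/\sqrt{2\mu}=\gamma-\varepsilon/(2\gamma)$, hence $t^2\geqslant\gamma^2-\varepsilon$ and $\mathbb{P}[\mathcal{N}(0,1)>t]\leqslant\tfrac12 e^{-t^2/2}\leqslant\tfrac12 e^{\varepsilon/2}\,\delta/1.25<\delta$ using $\gamma^{2}>2\ln(1.25/\delta)$ and $\varepsilon<1$, so leaning on the cited reference for that final estimate is unobjectionable.
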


When the DP mechanism produces solutions to an optimization problem, it is also important to quantify the {\em optimality loss}, i.e., the distance between the optimal solutions of the original mechanism $\M(D)$ and its differentially private counterpart $\Mdp(D)$ evaluated on the original dataset $D$.

\section{Problem Statement}
\label{sec:goals}

In the context of the underlying dispatch problem, the DSO collects a
dataset $D=\{d_i^{p}\}_{i \in \set{N}}$ of customer \emph{sensitive} loads and dispatches the DER according to the
solution of the OPF model \eqref{det_OPF}.  The OPF model acts as
a mechanism $\M:D\mapsto s$ that maps the dataset $D$ into an optimal OPF
solution $s^{\star}$. The solution is a tuple comprising generator set
points $\{g_i^p,g_i^q\}_{i \in\set{N}}$, power flows $\{f_\ell^p,
f_\ell^q\}_{\ell \in \set{L}}$, and voltages $\{u_i\}_{i \in \set{N}},$ as depicted on the left plane in Fig. \ref{OPF_projections}.
However, the release of $s^{\star}$ poses a privacy threat:
an adversary with access to the items in $s^{\star}$ could
\emph{decode} the customers activities
\cite{duarte2012non,cole1998data}. For instance, the voltage sags at a
node of interest discloses the activity of residential costumers
(e.g., charging an electrical vehicle). Voltages and flows (currents)
also encode information about the technology, production patterns, and
other commercial data of industrial customers \cite{fioretto2019differential}.

\begin{figure}
    \centering
    \hspace{1.5cm}\resizebox{0.42\textwidth}{!}{%
    \includegraphics{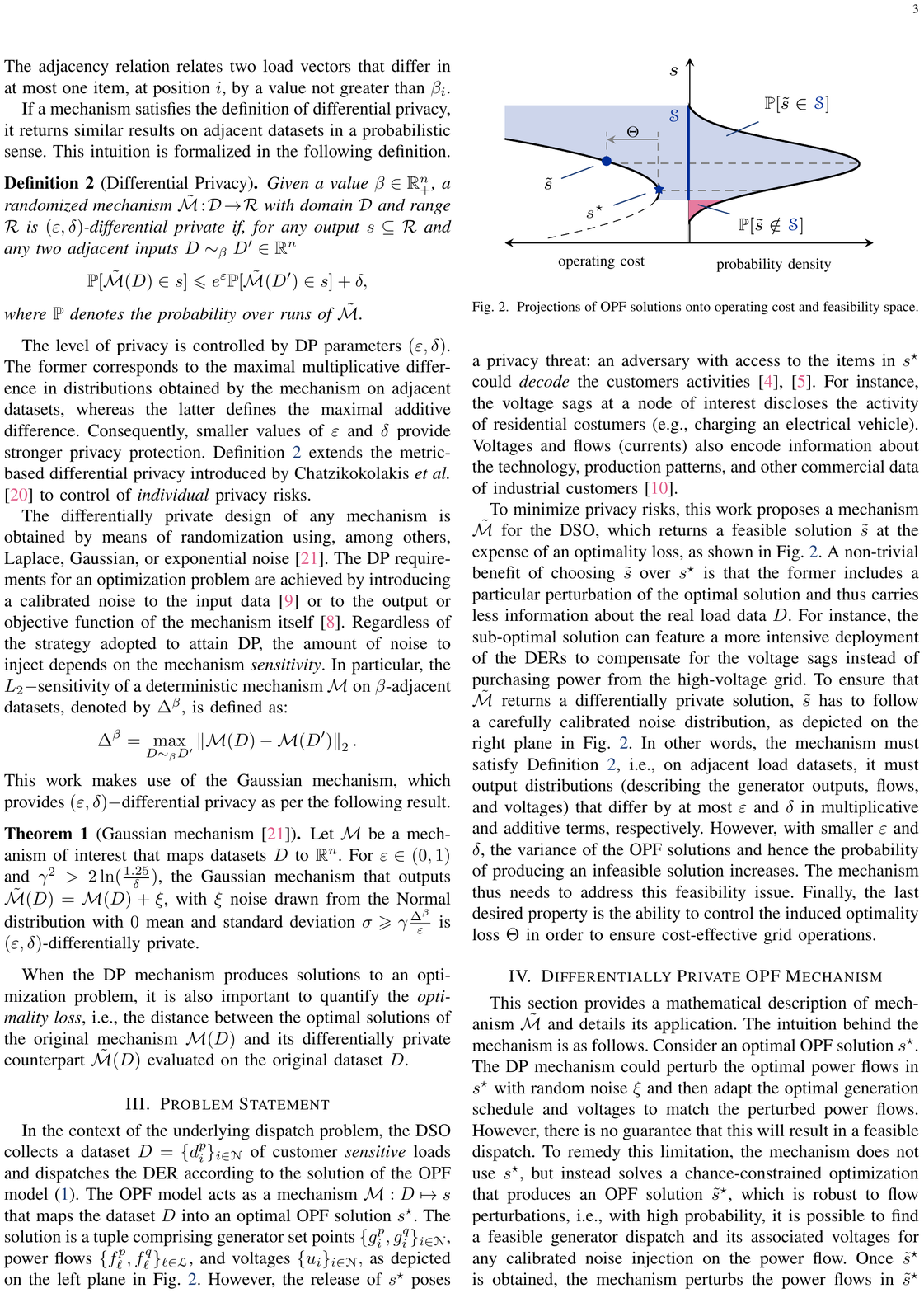}
    }
    \caption{Projections of OPF solutions onto operating cost and feasibility space.}
    \label{OPF_projections}
\end{figure}

To minimize privacy risks, this work proposes a mechanism $\tilde{{\cal M}}$ for the DSO, which returns a feasible solution $\tilde{s}$ at the expense of an optimality loss, as shown in Fig.~\ref{OPF_projections}.  A non-trivial benefit of choosing $\tilde{s}$ over $s^{\star}$ is that the former includes a particular perturbation of the optimal solution and thus carries less information about the real load data $D$. \newtext{For instance, the sub-optimal solution can feature a more intensive deployment of expensive DERs instead of purchasing less expensive power from the high-voltage grid.} To ensure that $\tilde{{\cal M}}$ returns a differentially private solution, $\tilde{s}$ has to follow a carefully calibrated noise distribution, as depicted on the right plane in Fig.~\ref{OPF_projections}. In other words, the mechanism must satisfy Definition \ref{eq:dp_def}, i.e., on adjacent load datasets, it must output distributions that differ by at most $\varepsilon$ and $\delta$ in multiplicative and additive terms, respectively. However, with smaller $\varepsilon$ and $\delta$, the variance of the OPF solutions and hence the  probability of producing an infeasible solution increases. The mechanism thus needs to address this feasibility issue. Finally, the last desired property is the ability to control the induced optimality loss $\Theta$ in order to ensure cost-effective grid operations. 

\newtext{
To provide differentially private OPF solutions, the work focuses on the randomization of active power flows as their sensitivities to grid loads can be directly upper-bounded by the load magnitudes in radial grids. Since the OPF equations \eqref{det_OPF_root}--\eqref{det_OPF_voltage} couple OPF variables, the randomization of power flows will also induce the randomization of reactive power flows and voltages. Therefore, the randomized OPF mechanism $\tilde{\mathcal{M}}$ is now seen as a mapping from a dataset $D$ to the active power flow solution $f^{p}$. Let $F^{p}\in\mathbb{R}^{|\set{L}|}$ be a particular realization of the randomized active power flows. The privacy goal of this work is to ensure that $\tilde{\mathcal{M}}$ satisfies 
$$
D \sim_\beta D': \mathbb{P}[\tilde{{\cal M}}(D) \in F^{p}] \leqslant e^\varepsilon \mathbb{P}[\tilde{{\cal M}}(D') \in F^{p}] + \delta,\;\text{i.e.,}
$$
the definition of $(\varepsilon,\delta)-$DP on $\beta-$adjacent load datasets. 
}
\section{Differentially Private OPF Mechanism}
\label{DP_OPF_mechansim}
\newtext{
This section provides a mathematical description of mechanism $\Mdp$ and details its application. Section \ref{perturbation_OPF_solution} describes the perturbation of generator outputs to attain the randomization of power flows, Section \ref{CC_OPF_solution} details the chance-constrained program that accommodates the perturbation in a feasible manner, and Section \ref{subsection:mechanism} explains the mechanism application as well as its feasibility and privacy guarantees.
}
\subsection{Random Perturbation of OPF Solutions}
\label{perturbation_OPF_solution}
\newtext{
Consider a random perturbation $\xi\in \RR^{|\set{L}|}$ which obeys a Gaussian
distribution $\mathcal{N}(0,\Sigma)$ with covariance matrix
$$\Sigma=\text{diag}([\sigma_{1}^{2},\dots,\sigma_{|\set{L}|}^{2}])=\text{diag}(\sigma^2)\in\mathbb{R}^{|\set{L}|\times|\set{L}|}.$$
Throughout the paper, $\Sigma$, $\sigma^{2}$, and $\sigma$ are used interchangeably
to discuss the perturbation parameters. The power flows are conditioned on perturbation $\xi$ when the following affine policies are imposed on DERs and substation supplies:
\begin{subequations}\label{affine_policies}
\begin{align}\label{affine_policy} \tilde{g}_{i}^{p}(\xi) = g_{i}^{p} +
\mysum{\ell\in\set{D}_{i}}\alpha_{i\ell}\xi_{\ell} -
\mysum{\ell\in\set{U}_{i}}\alpha_{i\ell}\xi_{\ell},
\;\forall i\in\set{N},
\end{align} 
where $\tilde{g}_{i}^{p}(\xi)$ and $g_{i}^{p}$ are, respectively, the random and nominal (mean) active power outputs, and
$\alpha_{i\ell}$ is the portion of random perturbation $\xi_{\ell}$ provided by the supplier at node $i$, modeled as a free variable. The policies in \eqref{affine_policy} are viable when the following balancing conditions are enforced: 
\begin{align}\label{policy_balance}
      \mysum{i\in\set{U}_{\ell}}\alpha_{i\ell}=1,
    \;\mysum{i\in\set{D}_{\ell}}\alpha_{i\ell}=1,
    \;\forall \ell\in\set{L}, 
\end{align} 
\end{subequations} 
such that for each line $\ell$, the upstream suppliers adjust their aggregated output by $\xi_{\ell}$ and the downstream DERs counterbalance this perturbation by $\xi_{\ell}$, thus satisfying power balance.}

\newtext{The policies in \eqref{affine_policies} differ from those in stochastic dispatch models in \cite{bolognani2017fast,mieth2018data,bienstock2019variance,roald2016corrective,dvorkin2020chance}, where the overall generator recourse compensates for the mismatch between grid loads and renewable forecast error realizations. However, since the affine nature of generator response remains similar, the proposed policy directly extends to balance renewable forecast errors.}

To provide a succinct representation of the randomized OPF variables, consider a topology matrix 
$T \in \mathbb{R}^{|\set{N}|\times|\set{L}|}$ whose elements are such that:
\begin{equation*}
    T_{i\ell} = \left\{
    \begin{array}{rl}
        1, & \text{if line $\ell$ is downstream w.r.t.~node $i$},\\
        -1, & \text{if line $\ell$ is upstream w.r.t.~node $i$} \\
        0, & \text{otherwise}.
    \end{array} \right.
\end{equation*}

\noindent
Consider also an auxiliary row vector 
$\rho_{i}^{p}=T_{i}\circ\alpha_{i}$ 
that returns a Schur product of $i^{\text{th}}$ row of $T$ and $i^{\text{th}}$ row of $\alpha$, and set $\rho_{i}^{q} = \rho_{i}^{p}\tan\phi_{i}$. \newtext{If the grid DERs allow, the later can be relaxed to model variable DER power factors.}
Using this notation, the perturbed OPF solution is modeled as the following set of random variables:
\begin{subequations}\label{randomized_variables}
\begin{flalign} 
\tilde{g}_{i}^{\dag}(\xi) &= g_{i}^{\dag}+\rho_{i}^{\dag}\xi, \; \forall i \in \set{N},\label{randomized_variables_gen}\\ 
\tilde{f}_{\ell}^{\dag}(\xi) &= f_{\ell}^{\dag} - \Big[\rho_{\ell}^{\dag}+\mysum{j\in\set{D}_{\ell}}\rho_{j}^{\dag}\Big]\xi, \; \forall \ell \in \set{L},\label{randomized_variables_flow}\\
\tilde{u}_{i}(\xi) &= u_{i} + 2\mysum{j\in\set{R}_{i}} \big[r_{j}\big(\rho_{j}^{p} + \mysum{k\in\set{D}_{j}}\rho_{k}^{p}) + \notag\\ 
    & \hspace{61pt}  x_{j}\big(\rho_{j}^{q} +\mysum{k\in\set{D}_{j}}\rho_{k}^{q}\big)\big]\xi, \; \forall i \in \set{L},\label{randomized_variables_voltage}
\end{flalign}
\end{subequations} 
where the randomized power flows $\tilde{f}_{i}^{\dag}$ are obtained
by substituting generator policy \eqref{affine_policy} into
\eqref{det_OPF_flow}, and randomized voltage magnitudes
$\tilde{u}_{i}$ are expressed by substituting $\tilde{f}_{i}^{\dag}$
into \eqref{det_OPF_voltage}, \newtext{refer to Appendix \ref{app:system_response} for details}.  Each variable is thus represented by
its nominal component and its random component whose realization
depends on $\xi$. \newtext{Furthermore, the random components of power flows in \eqref{randomized_variables_flow} and voltages \eqref{randomized_variables_voltage} also depend on the generator dispatch decisions $\rho^{\dag}$. Therefore, by properly calibrating the parameters of $\xi$ and finding the optimal dispatch decisions, the
randomized OPF solution in \eqref{randomized_variables} provides
the required privacy guarantees (see Section \ref{subsection:mechanism}, Theorem \ref{th:privacy})}. However, there is yet no guarantee that the randomized OPF solution in \eqref{randomized_variables} is feasible.

\subsection{The Chance-Constrained Optimization Program}
\label{CC_OPF_solution}

To obtain a feasible dispatch, the proposed mechanism uses a chance-constrained program which optimizes the affine functions in \eqref{randomized_variables} to make OPF solution feasible for any realization of random variable $\xi$ with a high probability.  The chance-constrained program is obtained by substituting variables \eqref{randomized_variables} into the base OPF model \eqref{det_OPF} and enforcing chance constraints on the grid limits. Its tractable formulation is provided in \eqref{model:cc_OPF_ref}, which is obtained considering the following reformulations.  



\subsubsection{Objective Function Reformulation}


The chance-constrained program minimizes the {\it expected} cost, which is reformulated as follows:
\[
\mathbb{E}_{\xi}\Big[\mysum{i\in\set{N}}c_{i}\tilde{g}_{i}^{p}\Big] = \mathbb{E}_{\xi}\Big[\mysum{i\in\set{N}}c_{i}(g_{i}^{\dag}+\rho_{i}^{\dag}\xi)\Big] = \mysum{i\in\set{N}} c_{i}g_{i}^{p},\;\forall i\in\set{N},
\]
due to the zero-mean distribution of $\xi$. 

\subsubsection{Inner Polygon Approximation of the Quadratic Power Flow Constraints} 
The substitution of the random power flow variables in \eqref{randomized_variables_flow} into the apparent power flow limit constraints \eqref{det_OPF_flow_limit} results in the following expression
\[
(\tilde{f}^p_\ell)^2 + (\tilde{f}^p_\ell)^2 \leq \bar{f}^2_\ell,   \;\forall\ell\in\set{L},                                                                         
\]
which exhibits a quadratic dependency on random variable $\xi$, for which no tractable chance-constrained reformulation is known. To resolve this issue, the above quadratic constraint is replaced by the inner polygon \cite{akbari2016linear,mieth2019distribution}, which writes as
\begin{align}\label{eq:inner_polygon}
    \gamma_{c}^{p}\tilde{f}_{i}^{p} + \gamma_{c}^{q}\tilde{f}_{i}^{q} + \gamma_{c}^{s}\overline{f}_{i}\leqslant0,\;\forall i\in\set{L},\forall c\in\set{C},
\end{align}
where $\gamma_{c}^{p}, \gamma_{c}^{q}, \gamma_{c}^{s}$ are the coefficients for each side $c$ of the polygon. The cardinality $|\set{C}|$ is arbitrary, but a higher cardinality brings a better accuracy. \newtext{Equation \eqref{eq:inner_polygon} is not a relaxation but an inner convex approximation: if the power flow solution is feasible for \eqref{eq:inner_polygon}, it is also feasible for the original quadratic constraint \eqref{det_OPF_flow_limit}. } 

\subsubsection{Conic Reformulation of Linear Chance Constraints} 

For the normally distributed variable $\xi$ \newtext{with known moments}, the chance constraint of the form $\mathbb{P}_{\xi}[\xi^{\top}x\leqslant b]\geqslant1-\eta$ is translated into a second-order cone constraint as \cite[Chapter 4.2.2]{boyd2004convex}:
$$ z_{\eta}\norm{\text{std}(\xi^{\top}x)}_{2}\leqslant b - \mathbb{E}_{\xi}[\xi^{\top}x],$$ 
where $z_{\eta}=\Phi^{-1}(1-\eta)$ is the inverse cumulative distribution function of the standard Gaussian distribution at the $(1-\eta)$ quantile, and $\eta$ is the constraint violation probability. 
Therefore, the individual chance constraints on the generation, voltage, and power flow variables are formulated in a conic form in \eqref{cc_OPF_ref_gen_max}--\eqref{cc_OPF_ref_flow}, respectively. The resulting tractable formulation of the chance-constrained OPF program is as follows:
\begin{subequations}\label{model:cc_OPF_ref}
\begin{align}
    &\text{CC-OPF}\colon\;\minimize{\set{V}=\{g^{\dag},f^{\dag},u,\rho^{\dag}\}}\;\mysum{i\in\set{N}}c_{i}g_{i}^{p}\label{cc_OPF_ref_obj}\\
    &\text{s.t.}\quad\text{Equations}\;\eqref{det_OPF_root}-\eqref{det_OPF_voltage},\eqref{policy_balance},\label{cc_OPF_ref_det_con}\\
    & z_{\eta^{g}}\normtwo{\big}{\rho_{i}^{\dag}\sigma}_{2}\leqslant\overline{g}_{i}^{\dag}-g_{i}^{\dag},\;\forall i\in\set{N},\label{cc_OPF_ref_gen_max}\\
    & z_{\eta^{g}}\normtwo{\big}{\rho_{i}^{\dag}\sigma}_{2}\leqslant g_{i}^{\dag} - \underline{g}_{i}^{\dag},\;\forall i\in\set{N},\label{cc_OPF_ref_gen_min}\\
    &z_{\eta^{u}}\Big\lVert\Big[\mysum{j\in\set{R}_{i}}\big[r_{j}\big(\rho_{j}^{p} +\mysum{k\in\set{D}_{j}}\rho_{k}^{p}) + x_{j}\big(\rho_{j}^{q} +\mysum{k\in\set{D}_{j}}\rho_{k}^{q}\big)\big]\Big]\sigma\Big\rVert_{2}\nonumber\\
        &\quad\leqslant \textstyle\frac{1}{2}\left(\overline{u}_{i}-u_{i}\right), \;\forall i\in\set{L},\label{cc_OPF_ref_volt_max}\\
    &z_{\eta^{u}}\Big\lVert\Big[\mysum{j\in\set{R}_{i}}\big[r_{j}\big(\rho_{j}^{p} +\mysum{k\in\set{D}_{j}}\rho_{k}^{p}) + x_{j}\big(\rho_{j}^{q} +\mysum{k\in\set{D}_{j}}\rho_{k}^{q}\big)\big]\Big]\sigma\Big\rVert_{2}\nonumber\\
        &\quad\leqslant \textstyle\frac{1}{2}\left(u_{i} - \underline{u}_{i}\right), \;\forall i\in\set{L},\label{cc_OPF_ref_volt_min}\\
    &z_{\eta^{f}}\normtwo{\Big}{\big(\gamma_{c}^{p}\big[\rho_{\ell}^{p}+\underset{{i\in\set{D}_{\ell}}}{\textstyle\sum}\rho_{i}^{p}\big]+\gamma_{c}^{q}\big[\rho_{\ell}^{q}+\underset{{i\in\set{D}_{\ell}}}{\textstyle\sum}\rho_{i}^{q}\big]\big)\sigma}_{2}\leqslant\nonumber\\
    &\quad-\gamma_{c}^{p}f_{\ell}^{p} - \gamma_{c}^{q}f_{\ell}^{q} - \gamma_{c}^{s}\overline{f}_{\ell},\;\forall \ell\in\set{L},\forall c\in\set{C}.\label{cc_OPF_ref_flow}
\end{align}
\end{subequations}

\subsection{The Privacy-Preserving Mechanism and Guarantees}
\label{subsection:mechanism}
The functioning of the privacy-preserving mechanism $\tilde{{\cal M}}$ is explained in Algorithm \ref{alg}. The Algorithm first computes the covariance matrix $\Sigma$ that encodes the DP parameters $(\varepsilon, \delta)$, and adjacency parameter $\beta$. The mechanism then solves the optimization problem in \eqref{model:cc_OPF_ref} to obtain an optimal chance-constrained solution $\optimal{\set{V}}$. Last, the mechanism samples the random perturbation and obtains the final OPF solution using equations \eqref{randomized_variables}. By design of problem \eqref{model:cc_OPF_ref}, the sampled OPF solution is guaranteed to satisfy grid limits and customer loads up to specified violation probabilities $\eta^{g},\eta^{u}$ and $\eta^{f}$, of the generator, voltage, and power flow constraints. 

\begin{algorithm}[t]
  \newtext{{\bf Input:} $D, \varepsilon, \delta, \beta, \eta^{g}, \eta^{u}, \eta^{f}$\\
  Define covariance $\Sigma=f(\varepsilon, \delta, \beta)$ as per Theorem \ref{th:privacy}\\
  Solve $\optimal{\set{V}}\;\;\leftarrow\;\text{argmin problem}\;\eqref{model:cc_OPF_ref} $ using $D$ and $\Sigma$\\
  Sample random perturbation $\hat{\xi}\sim\mathcal{N}(0,\Sigma)$\\
  Obtain final OPF solution from \eqref{randomized_variables} using $\optimal{\set{V}}$ and $\hat{\xi}$\\
  {\bf Release:} $\tilde{f}^{\dag}(\hat{\xi}),\tilde{u}(\hat{\xi}),\tilde{g}^{\dag}(\hat{\xi})$}
  \caption{DP CC-OPF mechanism $\tilde{\mathcal{M}}$}
  \label{alg}
\end{algorithm}

The privacy guarantees, in turn, depend on the specification of DP parameters $(\varepsilon,\delta)$ and the vector of adjacency coefficients $\beta.$ For simplicity, the DP parameters are assumed to be uniform for all customers and specified by the DSO, whereas customer privacy preferences are expressed in the submitted adjacency coefficients. In this setting, the load of every customer $i$ is guaranteed to be indistinguishable from any other load in the range $[d_{i}^{p}-\beta_{i},d_{i}^{p}+\beta_{i}]$ in the release of OPF solution related to node $i$ up to DP parameters $(\varepsilon,\delta)$. This guarantee is formalized by the following result. 

\begin{theorem}[Privacy Guarantees]
\label{th:privacy}
Let $(\varepsilon,\delta)\in(0,1)$ and $\sigma_{i} \geqslant
\beta_{i}\sqrt{2\text{ln}(1.25/\delta)}/\varepsilon, \;\forall
i\in\set{L}.$ Then, if problem \eqref{model:cc_OPF_ref} returns an optimal
solution, mechanism $\tilde{{\cal M}}$ is $(\varepsilon, \delta)$-differentially private for
$\beta$-adjacent load datasets. That is, the probabilities of
returning a power flow solution in set $F^{p}$ on any two
$\beta$-adjacent datasets $D$ and $D'$ are such that
\begin{equation*}
  \mathbb{P}[\tilde{{\cal M}}(D) \in F^{p}] \leqslant e^\varepsilon \mathbb{P}[\tilde{{\cal M}}(D') \in F^{p}] + \delta,
\end{equation*}
where $\mathbb{P}$ denotes the probability over runs of $\tilde{{\cal M}}$. 
\end{theorem}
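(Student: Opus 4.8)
The plan is to reduce the privacy statement to a direct application of the Gaussian mechanism (Theorem \ref{def_gaus_mech}). The key observation is that, although $\tilde{\mathcal{M}}$ is defined through the chance-constrained program \eqref{model:cc_OPF_ref} and the affine policies \eqref{affine_policies}, the \emph{released} active power flow vector has the simple structure $\tilde{f}^{p}(\xi) = f^{p} + B\xi$, where $f^{p}$ is the (deterministic) nominal solution returned by \eqref{model:cc_OPF_ref} and $B$ is a linear map determined by the optimal policies $\rho^{p\star}$. Reading \eqref{randomized_variables_flow} componentwise, $\tilde{f}_{\ell}^{p}(\xi) = f_{\ell}^{p} - \big[\rho_{\ell}^{p} + \sum_{j\in\set{D}_{\ell}}\rho_{j}^{p}\big]\xi$. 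So first I would fix any pair of $\beta$-adjacent datasets $D \sim_{\beta} D'$ differing only in entry $i$, with $|d_{i}^{p} - d_{i}^{p\prime}| \leqslant \beta_{i}$, and argue that the randomized mechanism applied to either dataset produces the \emph{same} affine map $B$ (the policies and nominal generator levels are feasibility/optimality constructs of \eqref{model:cc_OPF_ref}; what matters for privacy is that $B$ does not itself leak the load — this is where I would lean on the fact that the per-line flow sensitivity to $d_{i}^{p}$ is exactly $1$ on the lines of $\set{R}_{i}$ and $0$ elsewhere, a structural consequence of the radial topology and equations \eqref{det_OPF_root}–\eqref{det_OPF_flow}).

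Next I would compute the $L_{2}$-sensitivity of the \emph{deterministic} flow map. Changing $d_{i}^{p}$ to $d_{i}^{p\prime}$ shifts $f_{\ell}^{p}$ by $(d_{i}^{p} - d_{i}^{p\prime})$ precisely for the lines $\ell$ on the path $\set{R}_{i}$ from node $i$ to the root and leaves all other flows unchanged; hence $\|\mathcal{M}(D) - \mathcal{M}(D')\|_{2} = |d_{i}^{p} - d_{i}^{p\prime}|\sqrt{|\set{R}_{i}|} \le \beta_{i}\sqrt{|\set{R}_{i}|}$. The subtle point the theorem sidesteps is that the guarantee is stated \emph{per node} — the protection is for "the release of OPF solution related to node $i$" — so the relevant sensitivity to calibrate against is the coordinate-wise one, $\Delta^{\beta}_{\ell} = \beta_{i}$ for the single flow coordinate $\ell$, matching the stated condition $\sigma_{i} \ge \beta_{i}\sqrt{2\ln(1.25/\delta)}/\varepsilon$. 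I would make this precise by invoking the independence of the coordinates of $\xi$ (diagonal $\Sigma$): the density of $\tilde{f}^{p}(\xi)$ factorizes over lines, and on the adjacent pair only the factors for $\ell \in \set{R}_{i}$ have shifted means, each by at most $\beta_{i}$. Then a line-by-line application of the scalar Gaussian-mechanism privacy bound with $\sigma_{\ell} \ge \beta_{i}\sqrt{2\ln(1.25/\delta)}/\varepsilon$, combined with composition over the $|\set{R}_{i}|$ affected coordinates (or, more directly, observing that the joint shift has $L_{2}$-norm at most $\beta_{i}\sqrt{|\set{R}_{i}|}$ and absorbing the $\sqrt{|\set{R}_{i}|}$ into the sensitivity bound), yields the $(\varepsilon,\delta)$ inequality $\mathbb{P}[\tilde{\mathcal{M}}(D)\in F^{p}] \le e^{\varepsilon}\mathbb{P}[\tilde{\mathcal{M}}(D')\in F^{p}] + \delta$.

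Finally I would dispense with the conditional phrasing "if problem \eqref{model:cc_OPF_ref} returns an optimal solution": this is needed only so that $f^{p\star}$ and $\rho^{p\star}$ — and hence the map $B$ — are well defined; the privacy argument itself is then a statement about the pushforward of the Gaussian $\mathcal{N}(0,\Sigma)$ under the affine map, and is independent of the optimization's internals. I expect the main obstacle to be making rigorous the claim that the post-processing by the (data-dependent) affine policies does not degrade privacy — i.e., that releasing $f^{p} + B\xi$ is no worse than releasing $f^{p} + \xi'$ for a suitably calibrated $\xi'$. The cleanest route is the DP post-processing immunity property (cited via \cite{dwork2014algorithmic}): express the released flows as a deterministic function of the Gaussian-mechanism output on the vector of flow sensitivities, so that no re-derivation of the Gaussian bound for the composed map is required. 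I would also need to double-check the edge case where a line $\ell$ has zero net flow sensitivity (the bracket in \eqref{randomized_variables_flow} vanishing), which only makes that coordinate trivially private, and the requirement $\varepsilon \in (0,1)$ and $\gamma^{2} > 2\ln(1.25/\delta)$ inherited from Theorem \ref{def_gaus_mech}, which the hypothesis $\sigma_{i} \ge \beta_{i}\sqrt{2\ln(1.25/\delta)}/\varepsilon$ is exactly designed to meet.
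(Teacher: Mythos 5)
There is a genuine gap, and it sits exactly where the paper places its first lemma. Your released flow is $\tilde f^{p}(\xi)=f^{p\star}(D)-B(D)\xi$ with \emph{both} the nominal solution and the map $B$ (through $\rho^{p\star}$) depending on the sensitive dataset, so neither of your two devices for handling this works: the claim that adjacent datasets yield the same $B$ is unjustified (the optimal policies re-optimize with $D$), and post-processing immunity does not apply because the post-processing function itself is data-dependent. The paper's proof never needs either claim; instead, Lemma \ref{lemma_lower_bound} shows that the balancing conditions \eqref{policy_balance} force the coefficient of $\xi_{\ell}$ in $\tilde f_{\ell}^{p}$ to have magnitude exactly one, so $\text{std}[\tilde f_{\ell}^{p}]\geqslant\sigma_{\ell}$ no matter which optimal $\rho^{\star}$ the program returns; this is the step that lets the data-dependent affine policy inherit the Gaussian-mechanism calibration. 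Your edge-case remark is symptomatic of the missing idea and is in fact backwards: if the bracket in \eqref{randomized_variables_flow} vanished, that coordinate would be released with \emph{no} noise and privacy would fail for it, not hold trivially — \eqref{policy_balance} is precisely what rules this out.

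Two further points diverge from the paper in ways that matter. First, your sensitivity argument asserts the flow shift equals the load change exactly on the path $\set{R}_{i}$ and is zero elsewhere; under re-optimization this is false (DERs off the path may adjust and change other flows), and the paper's Lemma \ref{lemma_flow_load_sensitivity} instead derives only the upper bound $\partial\opt{f_{\ell}^{p}}/\partial d_{i}^{p}\leqslant 1$ from non-negativity of the generator-response terms, concluding every flow changes by at most $\beta_{i}$. Second, you correctly notice that several coordinates shift simultaneously, but your proposed fixes do not deliver the stated theorem: composition over $|\set{R}_{i}|$ coordinates degrades the guarantee to roughly $(|\set{R}_{i}|\varepsilon,|\set{R}_{i}|\delta)$, and "absorbing $\sqrt{|\set{R}_{i}|}$ into the sensitivity" would require $\sigma_{i}\geqslant\beta_{i}\sqrt{|\set{R}_{i}|}\sqrt{2\ln(1.25/\delta)}/\varepsilon$, which is stronger than the hypothesis. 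The paper resolves this by scoping the guarantee per customer (indistinguishability of $d_{i}^{p}$ in the release related to node $i$), combining the per-line bounds $\text{std}[\tilde f_{i}^{p}]\geqslant\sigma_{i}\geqslant\Delta_{i}^{\beta}\sqrt{2\ln(1.25/\delta)}/\varepsilon$ from the two lemmas and invoking Theorem \ref{def_gaus_mech} coordinate-wise; to follow the paper you should either adopt that per-line framing or strengthen the calibration, but you cannot have both the stated $\sigma_{i}$ and a joint-vector argument via composition.
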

\begin{proof}
The full proof is available in Appendix \ref{app:proof_privacy}
and relies on two intermediate results summarized in Lemmas
\ref{lemma_lower_bound} and \ref{lemma_flow_load_sensitivity}.  The first lemma shows that the standard deviation of power
flow related to customer $i$ is at least as much as
$\sigma_{i}$. Therefore, by specifying $\sigma_{i}$, the DSO attains
the desired degree of randomization. The second lemma shows that
$\beta_{i}\geqslant\Delta_{i}^{\beta}$, i.e., if $\sigma_{i}$ is
parameterized by $\beta_{i}$, then $\sigma_{i}$ is also parameterized
by sensitivity $\Delta_{i}^{\beta}$, required by the Gaussian
mechanism in Theorem \ref{def_gaus_mech}.
\end{proof}

\section{Mechanism Extensions}
\label{sec:properties}

\subsection{OPF Variance Control}

Due to the radial topology of distribution grids, the flow perturbations along the same radial branch induce larger flow variances than those intended by the covariance matrix $\Sigma$. This section extends the mechanism $\Mdp$ to reduce the overall flow variance while still preserving privacy guarantees. Two strategies are proposed to achieve this goal.

\subsubsection{Total Variance Minimization}
The flow standard deviation, obtained from \eqref{randomized_variables_flow}, depends on the DER participation variables $\rho^{\dag}$. Therefore, the variance of power flows can be controlled by optimizing the DER dispatch. This variance control strategy is enabled by replacing problem \eqref{model:cc_OPF_ref} at the core of mechanism $\Mdp$ by the following optimization:
\begin{subequations}\label{total_variance_control}
\begin{align}
    \text{ToV-CC-OPF}\colon\;&\minimize{\set{V}\cup \{t\}}\;
    \mysum{i\in\set{N}}c_{i}g_{i}^{p} + \mysum{\ell\in\set{L}}\psi_{\ell}t_{\ell}\\
    \text{s.t.}\quad&\normtwo{\Big}{\big[\rho_{\ell}^{p}+\mysum{i\in\set{D}_{\ell}}\rho_{i}^{p}\big]\sigma}_{2}\leqslant t_{\ell},\;\forall\ell\in\set{L},\label{total_variance_control_flow_std}\\
    &\text{Equations}\;\eqref{cc_OPF_ref_det_con}-\eqref{cc_OPF_ref_flow},
\end{align}
\end{subequations}
where the decision variable $t_{\ell}$ represents the standard deviation
of the active power flow in line $\ell$, which is penalized in the
objective function by a non-negative parameter $\psi_{\ell}$. By
choosing $\psi_{\ell},\forall \ell\in\set{L}$, the DSO minimizes the
total variance at the expense of operational cost.  Note that, by
Lemma \ref{lemma_lower_bound}, optimization
\eqref{total_variance_control} does not violate the privacy guarantees.

\subsubsection{Pursuing Target Variance}\label{sec:target_var}
This strategy solely perturbs the flow in the selected line of the radial branch (e.g., adjacent to the customer with the strongest privacy requirement) and constrains the DERs to maintain the flow variance in each line as required by the original matrix $\Sigma$. It specifies a new matrix $\hat{\Sigma}=\text{diag}([\hat{\sigma}_{1}^2,\dots,\hat{\sigma}_{|\set{L}|}^2])$, $\mathbbm{1}^{\top}\hat{\sigma}^2\leqslant \mathbbm{1}^{\top}\sigma^2$, that contains a smaller number of perturbations. This control is enabled  by replacing problem \eqref{model:cc_OPF_ref} by the following optimization:
\begin{subequations}\label{target_variance_control}
\begin{align}
    \text{TaV-CC-OPF}\colon\;&\minimize{\set{V}\cup \{t,\tau\}}\;
    \mysum{i\in\set{N}}c_{i}g_{i}^{p} + \mysum{\ell\in\set{L}}\psi_{\ell}\tau_{\ell}\\
    \text{s.t.}\quad&\normtwo{\Big}{\big[\rho_{\ell}^{p}+\mysum{i\in\set{D}_{\ell}}\rho_{i}^{p}\big]\hat{\sigma}}_{2}\leqslant t_{\ell},\;\forall\ell\in\set{L},\label{target_variance_control_flow_std}\\
    &\normtwo{\big}{t_{\ell}-\sigma_{\ell}}_{2}\leqslant \tau_{\ell},\;\forall \ell\in\set{L},\label{target_variance_control_var_distance}\\
    &\text{Equations}\;\eqref{cc_OPF_ref_det_con}-\eqref{cc_OPF_ref_flow} \;\text{with}\; \hat{\sigma},
\end{align}
\end{subequations}
where, $t_{\ell}$ returns the resulting flow standard deviation, while constraint \eqref{target_variance_control_var_distance} yields the distance $\tau_{\ell}$ between the resulting standard deviation and original one $\sigma_{\ell}=\Sigma_{\ell,\ell}^{1/2}$ required to provide customer at node $\ell$ with differential privacy. By penalizing this distance in the objective function, the DSO attains privacy at a smaller amount of random perturbations. Note, as optimization \eqref{target_variance_control} acts on covariance matrix $\hat{\Sigma}$ instead of $\Sigma$, the DSO needs to verify a posteriori that $t_{\ell}\geqslant\sigma_{\ell},\;\forall\ell\in\set{L}$. 

\subsection{Optimality Loss Control}

The application of mechanism $\Mdp$ necessarily leads to an optimality loss compared to the solution of non-private mechanism $\M$. This section slightly abuses the notation and denotes the cost of the non-private OPF solution and that of the proposed DP mechanism when evaluated on a dataset $D$ by $\M(D)$ and $\Mdp(D)$, respectively. The optimality loss $\Theta$ is measured in expectation as the $L_2$ distance, i.e., $$\mathbb{E}[\Theta]=\norm{\M(D)-\mathbb{E}[\Mdp(D)]}_{2},$$ as $\M(D)$ always provides a deterministic solution. However, the worst-case realization of $\Mdp(D)$ may significantly exceed the expected value and lead to a larger optimality loss. To this end, this section introduces the optimality loss control strategy using the Conditional Value-at-Risk (CVaR) measure \cite{shapiro2009lectures}. 


Consider $\varrho$\% of the worst-case realizations of the optimally loss. The expected value of these worst-case realizations can be modeled as a decision variable using the CVaR measure as
\begin{align}\label{CVaR_non_reformulated}
    \text{CVaR}_{\varrho} = \mu_{c} + \sigma_{c}\phi\left(\Phi^{-1}(1-\varrho)\right)/\varrho,
\end{align}
where $\mu_{c}$ and $\sigma_{c}$ represent the expected value and standard deviation of operational costs, while  $\phi$ and $\Phi^{-1}(1-\varrho)$ denote the probability density function and the inverse cumulative distribution function at the $(1-\varrho)$ quantile of the standard Normal distribution. From Section \eqref{CC_OPF_solution}, it follows that 
\begin{align*}
    \mu_{c}=\mathbb{E}[c^{\top}\tilde{g}^{p}]=\mathbb{E}[c^{\top}(g^{p}+\rho\xi)]=c^{\top}g^{p},
\end{align*}
for zero-mean $\xi$, and the standard deviation finds as
\begin{align*}
    \sigma_{c}=\text{std}[c^{\top}(g^{p}+\rho\xi)]=\text{std}[c^{\top}(\rho\xi)]=\norm{c^{\top}(\rho\sigma)}_{2},
\end{align*}
providing a convex reformulation of the CVaR in \eqref{CVaR_non_reformulated}. Therefore, for some trade-off parameter $\theta\in[0,1]$, the DSO can trade off the mean and $\text{CVaR}_{\varrho}$ of the optimality loss by substituting problem \eqref{model:cc_OPF_ref} at the core of mechanism $\Mdp$ by the following optimization CVaR-CC-OPF:
\begin{subequations}\label{cost_CVaR_optimization}
\begin{align}
    \minimize{\set{V}\cup \sigma_{c}}&(1-\theta)c^{\top}g^{p}+\theta\left[c^{\top}g^{p} + \textstyle\sigma_{c}\phi\left(\Phi^{-1}(1-\varrho)\right)/\varrho\right]\\
    \text{s.t.}\quad&\norm{c^{\top}(\rho\sigma)}_{2}\leqslant\sigma_{c},\\
    &\text{Equations}\;\eqref{cc_OPF_ref_det_con}-\eqref{cc_OPF_ref_flow},
\end{align}
\end{subequations}
where the standard deviation $\sigma_{c}$ is modeled as a decision variable. Notice, the optimality loss control by means of \eqref{cost_CVaR_optimization} does not violate the privacy
guarantees as per Lemma \ref{lemma_lower_bound}.

\section{Numerical Experiments}\label{sec:experiments}

The experiments consider a modified 15-node radial grid from \cite{papavasiliou2017analysis}, which includes network parameters taken from \cite{robert_code}, nodal loads as given in Table \ref{table_OPF_summary}, and nodal DERs with cost coefficients drawn from Normal distribution $c_{i}\sim \mathcal{N}(10,2)$ \scalebox{0.85}{\$/MWh}, generation limits $\underline{g}_{i}^{p}=0\;\scalebox{0.85}{MW}, \underline{g}_{i}^{p}=8\;\scalebox{0.85}{MW}$, and power factor $\text{tan}\phi_{i}=0.5$, $\forall i\in\set{N}$. The constraint violation probabilities are set as $\eta^{g}=1\%, \eta^{u}=2\%$ and $\eta^{f}=10\%$. The DP parameters are set to $\varepsilon\rightarrow1, \delta= 1/n = 0.071$ with $n$ being a number of grid customers, while the adjacency parameters $\beta_{i},\forall i\in\set{L},$ vary across the experiments. 
All models are implemented in Julia using the JuMP package \cite{lubin2015computing}, and all data and codes are relegated to the e-companion \cite{companion2019dvorkin}.

\subsection{Illustrative Example}\label{sec:exp_illustrative_example}
\begin{figure*}[!t]
    \begin{center}
    \resizebox{\textwidth}{!}{%
    \includegraphics{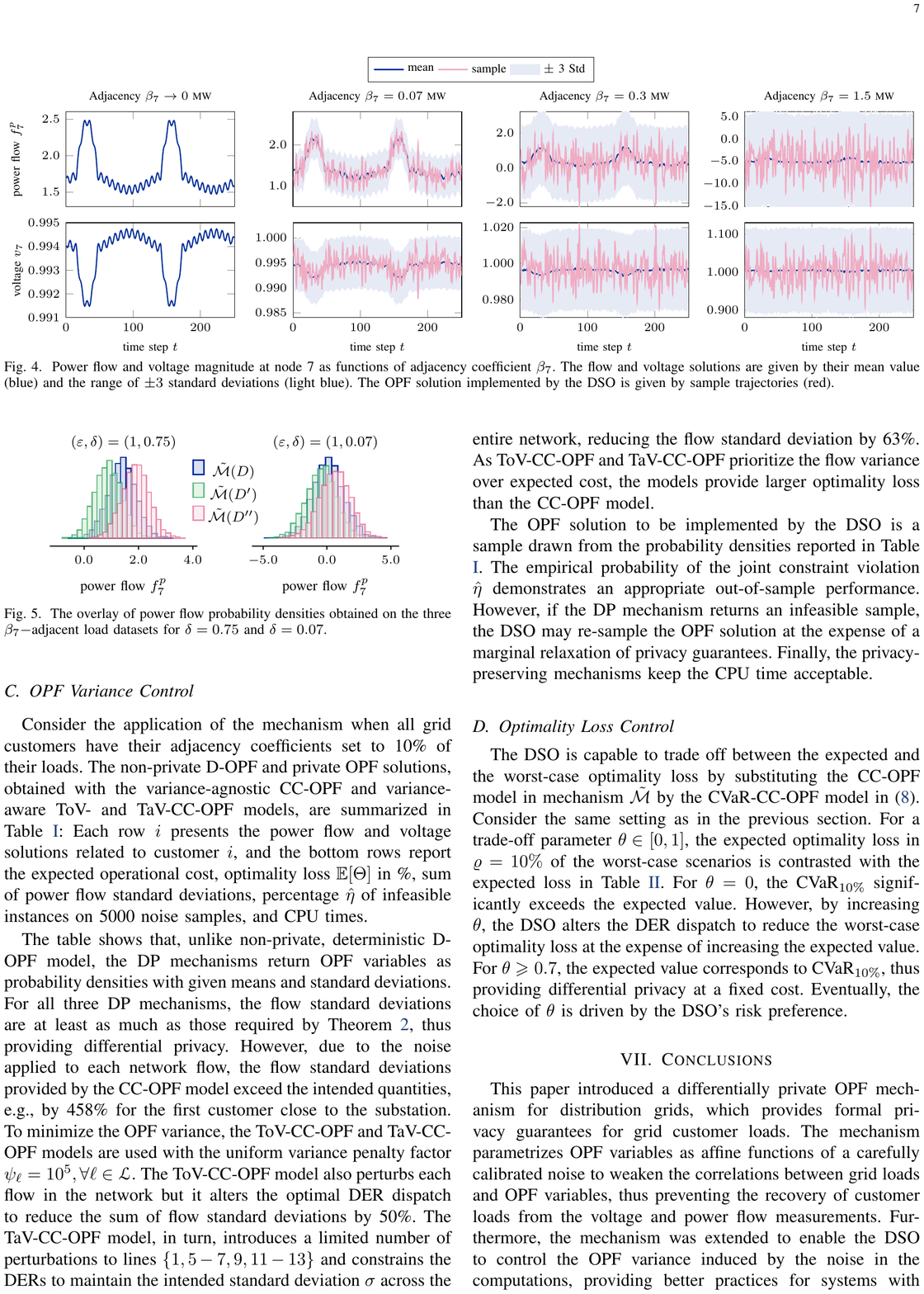}
    }
    \end{center}
    \caption{Power flow and voltage magnitude at node 7 as functions of adjacency coefficient $\beta_{7}$. The flow and voltage solutions are given by their mean value (blue) and the range of $\pm3$ standard deviations (light blue). The OPF solution implemented by the DSO is given by sample trajectories (red).}
    \label{fig_illustrative_example}
\end{figure*}

\newtext{The purpose of the illustrative example is to simulate and obfuscate periodic components of the load profile in power flow and voltage measurements.} Assume that the customer at node 7 has an atypical load pattern representing its production technology. Her pattern is obtained by multiplying the maximum load by $k(t)$, a multiplier with the following three periodic components:
\begin{align*}
    k(t)=&\textstyle\text{max}\left\{\sin{\frac{5}{10^2}t},\frac{7}{10}\right\} +\frac{5}{10^2}\sin{\frac{5}{10^2}t}+\frac{25}{10^3}\sin{\frac{75}{10^2}t}
\end{align*}
where $t$ is the time step. \newtext{The parameters of multiplier $k(t)$ are selected such that the load components have different magnitudes and frequencies.} The non-private OPF solution provided by the D-OPF model leaks the information about this pattern through the power flow $f_{7}^{p}$ and voltage $v_{7}$ readings, as displayed on the left plots in Fig. \ref{fig_illustrative_example}. To obfuscate the load pattern in the OPF solution, the customer submits the privacy preference $\beta_{7}$, which is accommodated by the DSO using mechanism $\Mdp$. Figure \ref{fig_illustrative_example} shows that by setting $\beta_{7}\rightarrow0.07$ \scalebox{0.85}{MW}, the presence of the smallest periodic component is obfuscated through randomization, while the presence of the two remaining components is still distinguished. With an increasing $\beta_{7}$, the mechanism further obfuscates the medium and largest periodic components.

\subsection{Privacy Guarantees} \label{sec:exp_privacy}
\begin{figure}
    \begin{center}
    \includegraphics{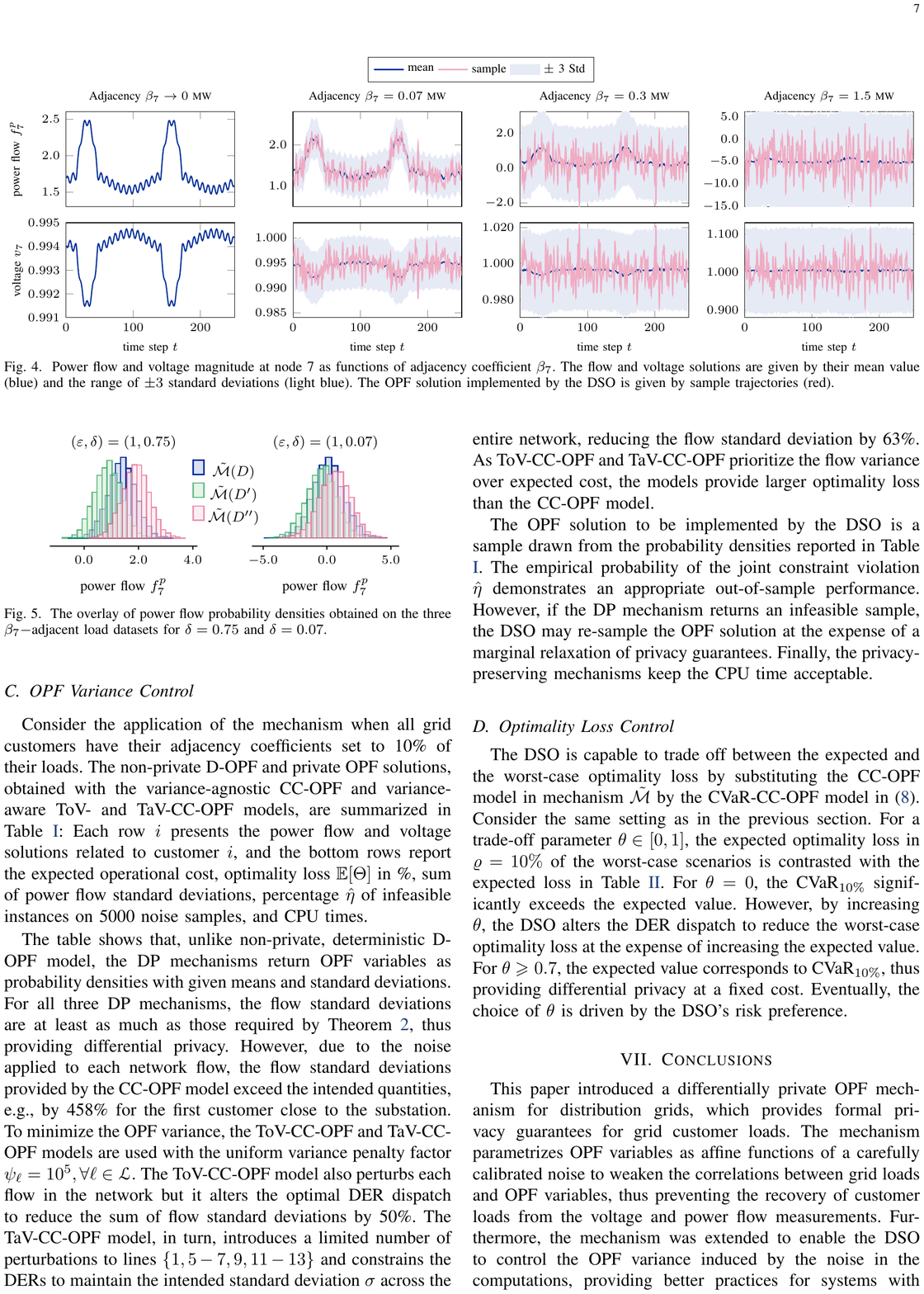}
    \end{center}
    \caption{The overlay of power flow probability densities obtained on the three $\beta_{7}-$adjacent load datasets for $\delta=0.75$ and $\delta=0.07$ (5000 samples).}
    \label{fig_privacy_feas_guarantees}
\end{figure}

To illustrate the privacy guarantees of Theorem \ref{th:privacy}, consider the same grid customer at node 7 with the load of 2.35 \scalebox{0.85}{MW}. For $\beta_{7}$, consider two adjacent
load datasets $D'$ and $D''$, containing $d_{7}^{\prime p}=d_{7}^{p}-\beta_{7}$ and $d_{7}^{\prime\prime p}=d_{7}^{p}+\beta_{7}$, respectively. The non-private OPF
mechanism returns the following power flows
\begin{align*}
    \M(D') = 2.05 \scalebox{0.85}{MW},\;
    \M(D) =& 2.35 \scalebox{0.85}{MW},\;
    \M(D'')= 2.65 \scalebox{0.85}{MW},
\end{align*} 
for $\beta_{7}=0.3$ \scalebox{0.85}{MW}, clearly distinguishing the differences in datasets through power flow readings.  The differentially private mechanism $\tilde{{\cal M}}$ in Algorithm \ref{alg}, in turn, obfuscates the load value used in the computation. Figure \ref{fig_privacy_feas_guarantees} shows that the mechanism makes the OPF solutions on the three datasets similar in the probabilistic sense, thus providing privacy guarantees for the original load dataset $D$. The maximal difference between the distributions of power flow solutions is bounded by the parameters $\varepsilon$ and $\delta$. Observe that the larger specification $\delta=0.75$ results in weaker guarantees, as  the distributions slightly stand out from one another. On the other hand, $\delta=0.07$ yields a larger noise magnitude overlapping the support of the three distributions. The OPF solution to be implemented is obtained from a single sample drawn from the blue distribution. By observing a single sample, an adversary cannot distinguish the distribution, and thus the dataset, it was sampled from. \newtext{Finally, Fig. \ref{fig_privacy_feas_guarantees} shows randomized OPF solutions obtained on a given load dataset. The parameters of the noise, however, are independent from load dataset (see Theorem \ref{th:privacy}), and the privacy guarantee for the customer at node 7 is independent from the loads and their variations at other grid nodes.}

\subsection{OPF Variance Control}\label{sec:exp_practices}

\begin{table*}[t]
\caption{Solution summary for the non-private and differentially private OPF mechanisms.}
\label{table_OPF_summary}
\begin{tabular}{@{\extracolsep{2pt}}cll|ll|llll|llll|llll@{}}
\Xhline{2\arrayrulewidth}
\\[-7.0pt]
\multicolumn{1}{c}{\multirow{3}{*}{$i$}} & \multicolumn{1}{c}{\multirow{3}{*}{$d_{i}^{p}$}} & \multicolumn{1}{c}{\multirow{3}{*}{$\sigma_{i}$}} & \multicolumn{2}{c}{D-OPF, Eq. \eqref{det_OPF}} & \multicolumn{4}{c}{CC-OPF, Eq. \eqref{model:cc_OPF_ref}} & \multicolumn{4}{c}{ToV-CC-OPF, Eq. \eqref{total_variance_control}} & \multicolumn{4}{c}{TaV-CC-OPF, Eq. \eqref{target_variance_control}} \\ 
\cline{4-5}\cline{4-5}
\cline{6-9}\cline{6-9}
\cline{10-13}\cline{10-13}
\cline{14-17}\cline{14-17}
\\[-7.0pt]
\multicolumn{1}{c}{} & \multicolumn{1}{c}{} & \multicolumn{1}{c}{} & \multicolumn{1}{c}{\multirow{2}{*}{$f_{i}^{p}$}} & \multicolumn{1}{c}{\multirow{2}{*}{$v_{i}$}} & \multicolumn{2}{c}{$f_{i}^{p}$} & \multicolumn{2}{c}{$v_{i}$} & \multicolumn{2}{c}{$f_{i}^{p}$} & \multicolumn{2}{c}{$v_{i}$} & \multicolumn{2}{c}{$f_{i}^{p}$} & \multicolumn{2}{c}{$v_{i}$} \\ 
\\[-8.0pt]
\cline{6-7}\cline{6-7}
\cline{8-9}\cline{8-9}
\cline{10-11}\cline{10-11}
\cline{12-13}\cline{12-13}
\cline{14-15}\cline{14-15}
\cline{16-17}\cline{16-17}
\multicolumn{1}{c}{} & \multicolumn{1}{c}{} & \multicolumn{1}{c}{} & \multicolumn{1}{c}{} & \multicolumn{1}{c}{} & \multicolumn{1}{c}{mean} & \multicolumn{1}{c}{std} & \multicolumn{1}{c}{mean} & \multicolumn{1}{c}{std} & \multicolumn{1}{c}{mean} & \multicolumn{1}{c}{std} & \multicolumn{1}{c}{mean} & \multicolumn{1}{c}{std} & \multicolumn{1}{c}{mean} & \multicolumn{1}{c}{std} & \multicolumn{1}{c}{mean} & \multicolumn{1}{c}{std} \\
\Xhline{2\arrayrulewidth}
0 & 0 & -- & -- & 1.00 & -- & -- & 1.00 & -- & -- & -- & 1.00 & -- & -- & -- & 1.00 & -- \\
1 & 2.01 & \textbf{0.48} & 8.5 & 1.00 & 11.3 & \textbf{2.68} & 1.00 & 0.0016 & 12.6 & \textbf{0.69} & 1.00 & 0.0004 & 13.0 & \textbf{0.48} & 1.00 & 0.0003 \\
2 & 2.01 & \textbf{0.48} & 6.5 & 1.00 & 9.3 & \textbf{2.68} & 0.99 & 0.0057 & 11.4 & \textbf{0.71} & 0.99 & 0.0015 & 11.0 & \textbf{0.48} & 0.99 & 0.0010 \\
3 & 2.01 & \textbf{0.48} & 4.4 & 1.00 & 7.3 & \textbf{2.68} & 0.99 & 0.0123 & 10.2 & \textbf{0.78} & 0.97 & 0.0033 & 9.0 & \textbf{0.48} & 0.98 & 0.0022 \\
4 & 1.73 & \textbf{0.41} & -8.0 & 1.00 & -1.4 & \textbf{1.72} & 0.99 & 0.0128 & 3.6 & \textbf{0.69} & 0.97 & 0.0034 & 1.7 & \textbf{0.41} & 0.98 & 0.0023 \\
5 & 2.91 & \textbf{0.70} & 5.1 & 1.00 & 3.1 & \textbf{0.87} & 0.99 & 0.0128 & 2.5 & \textbf{0.82} & 0.97 & 0.0035 & 1.9 & \textbf{0.70} & 0.98 & 0.0024 \\
6 & 2.19 & \textbf{0.52} & 2.2 & 1.00 & 0.1 & \textbf{0.87} & 0.99 & 0.0128 & 0.7 & \textbf{0.63} & 0.97 & 0.0038 & 1.0 & \textbf{0.52} & 0.98 & 0.0024 \\
7 & 2.35 & \textbf{0.56} & 2.3 & 0.99 & 0.9 & \textbf{0.63} & 0.98 & 0.0134 & 0.9 & \textbf{0.61} & 0.97 & 0.0039 & 1.0 & \textbf{0.56} & 0.98 & 0.0024 \\
8 & 2.35 & \textbf{0.56} & 10.5 & 0.99 & 6.7 & \textbf{1.18} & 0.98 & 0.0130 & 5.8 & \textbf{0.78} & 0.97 & 0.0036 & 6.4 & \textbf{0.56} & 0.98 & 0.0023 \\
9 & 2.29 & \textbf{0.55} & 5.8 & 0.99 & 3.5 & \textbf{0.88} & 0.98 & 0.0132 & 3.1 & \textbf{0.70} & 0.97 & 0.0037 & 3.6 & \textbf{0.55} & 0.98 & 0.0023 \\
10 & 2.17 & \textbf{0.52} & 3.5 & 0.99 & 1.2 & \textbf{0.88} & 0.98 & 0.0135 & 1.6 & \textbf{0.65} & 0.97 & 0.0038 & 1.3 & \textbf{0.52} & 0.97 & 0.0023 \\
11 & 1.32 & \textbf{0.32} & 1.3 & 0.99 & 0.4 & \textbf{0.39} & 0.98 & 0.0135 & 0.4 & \textbf{0.40} & 0.97 & 0.0038 & 0.6 & \textbf{0.32} & 0.97 & 0.0023 \\
12 & 2.01 & \textbf{0.48} & 6.5 & 1.00 & 3.6 & \textbf{1.23} & 1.00 & 0.0008 & 3.3 & \textbf{0.73} & 1.00 & 0.0004 & 3.6 & \textbf{0.48} & 1.00 & 0.0003 \\
13 & 2.24 & \textbf{0.54} & 4.5 & 0.99 & 1.6 & \textbf{1.23} & 1.00 & 0.0034 & 2.1 & \textbf{0.72} & 1.00 & 0.0019 & 3.2 & \textbf{0.54} & 0.99 & 0.0012 \\
14 & 2.24 & \textbf{0.54} & 2.2 & 0.99 & -0.6 & \textbf{1.23} & 1.00 & 0.0050 & 0.8 & \textbf{0.64} & 0.99 & 0.0027 & 1.0 & \textbf{0.54} & 0.99 & 0.0018
 \\
\Xhline{2\arrayrulewidth}
\multicolumn{3}{l|}{cost ($\mathbb{E}[\Theta]$)} & \multicolumn{2}{c|}{\$396.0 (0\%)} & \multicolumn{4}{c|}{\$428.0 (8.1\%)} & \multicolumn{4}{c|}{\$463.5 (17.1\%)} & \multicolumn{4}{c}{\$459.3 (16.0\%)}\\
\multicolumn{3}{l|}{$\sum_{i}\text{std}[f_{i}^{p}]$} & \multicolumn{2}{c|}{0 \scalebox{0.85}{MW}} & \multicolumn{4}{c|}{19.1 \scalebox{0.85}{MW}} & \multicolumn{4}{c|}{9.5 \scalebox{0.85}{MW}} & \multicolumn{4}{c}{7.1 \scalebox{0.85}{MW}}\\
\multicolumn{3}{l|}{infeas. $\hat{\eta}$} & \multicolumn{2}{c|}{0\%} & \multicolumn{4}{c|}{3.3\%} & \multicolumn{4}{c|}{6.9\%} & \multicolumn{4}{c}{5.5\%}\\
\multicolumn{3}{l|}{CPU time} & \multicolumn{2}{c|}{0.016$s$} & \multicolumn{4}{c|}{0.037$s$} & \multicolumn{4}{c|}{0.043$s$} & \multicolumn{4}{c}{0.052$s$}\\
\Xhline{2\arrayrulewidth}
\end{tabular}
\end{table*}

Consider the application of the mechanism when all grid customers have their adjacency coefficients set to 10\% of their loads. The non-private D-OPF and private OPF solutions, obtained with the variance-agnostic CC-OPF and variance-aware ToV- and TaV-CC-OPF models, are summarized in Table \ref{table_OPF_summary}: Each row $i$ presents the power flow and voltage solutions related to customer $i$, and the bottom rows report the expected operational cost, optimality loss $\mathbb{E}[\Theta]$ in \%, sum of power flow standard deviations, percentage $\hat{\eta}$ of infeasible instances on 5000 noise samples, and CPU times. 

The table shows that, unlike non-private, deterministic D-OPF model, the DP mechanisms return OPF variables as probability densities with given means and standard deviations. For all three DP mechanisms, the flow standard deviations are at least as much as those required by Theorem \ref{th:privacy}, thus providing differential privacy. However, due to the noise applied to each network flow, the flow standard deviations provided by the CC-OPF model exceed the intended quantities, e.g., by 458\% for the first customer close to the substation. To minimize the OPF variance, the ToV-CC-OPF and TaV-CC-OPF models are used with the uniform variance penalty factor $\psi_{\ell}=10^5,\forall \ell\in\set{L}.$ The ToV-CC-OPF model also perturbs each flow in the network but it alters the optimal DER dispatch to reduce the sum of flow standard deviations by 50\%. The  TaV-CC-OPF model, in turn, introduces a limited number of perturbations to lines $\{1,5-7,9,11-13\}$ and constrains the DERs to maintain the intended standard deviation $\sigma$ across the entire network, reducing the flow standard deviation by 63\%. As ToV-CC-OPF and TaV-CC-OPF prioritize the flow variance over expected cost, the models provide larger optimality loss than the CC-OPF model. 

The OPF solution to be implemented by the DSO is a sample drawn from the probability densities reported in Table \ref{table_OPF_summary}. The empirical probability of the joint constraint violation $\hat{\eta}$  demonstrates an appropriate out-of-sample performance. \newtext{However, if the DP mechanism returns an infeasible sample, the DSO may re-sample the OPF solution, yet it comes at the expense of the relaxation of privacy guarantees: every re-sampling round increases the privacy loss linearly, by $\varepsilon$, as per composition of DP \cite[Theorem 3.14]{dwork2014algorithmic}.} Finally, the privacy-preserving mechanisms keep the CPU time within the same time-frame as the standard, non-private D-OPF mechanism. 

\subsection{Optimality Loss Control}\label{sec:exp_opt_loss_control}


\begin{figure}
\centering
\resizebox{0.45\textwidth}{!}{%
\includegraphics{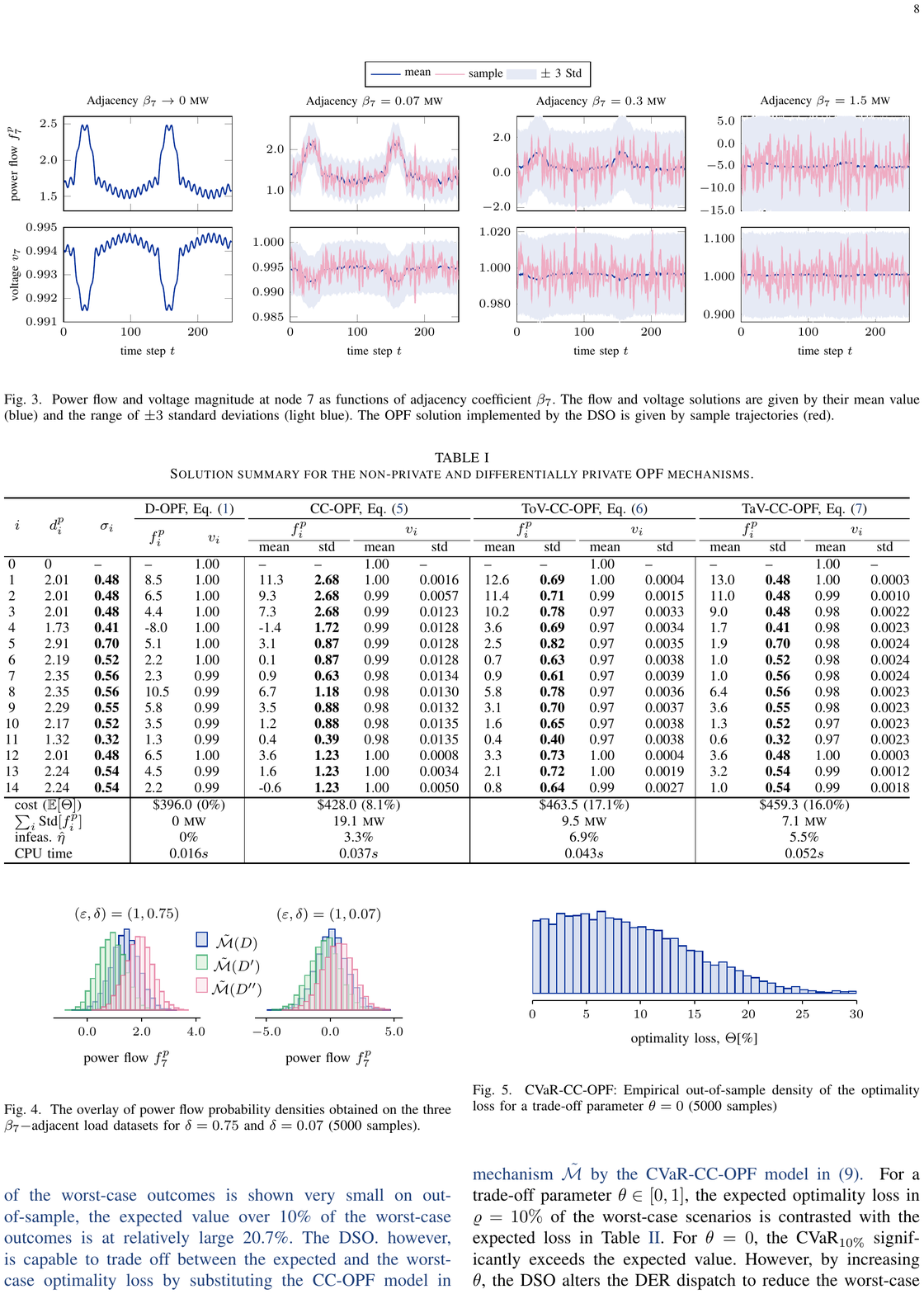}
}
\newtext{\caption{CVaR-CC-OPF: Empirical out-of-sample density of the optimality loss for a trade-off parameter $\theta=0$ (5000 samples)}}
\label{opt_loss_dist}
\end{figure}

\begin{table}
\centering
\caption{Trade-offs of the expected and $\text{CVaR}_{10\%}$ performance}
\label{CVaR_results}
\begin{tabular}{@{\extracolsep{2pt}}cccccc@{}}
\Xhline{2\arrayrulewidth}
\multicolumn{1}{c}{\multirow{2}{*}{$\theta$}} & \multicolumn{2}{c}{exp. value} & \multicolumn{2}{c}{$\text{CVaR}_{10\%}$} & \multicolumn{1}{c}{\multirow{2}{*}{$\sum_{i}\text{std}[f_{i}^{p}]$, \scalebox{0.85}{MW}}} \\
\cline{2-3}\cline{4-5}
\multicolumn{1}{c}{} & \multicolumn{1}{c}{cost, $\$$} & \multicolumn{1}{c}{$\Theta$,\%} & \multicolumn{1}{c}{cost, $\$$} & \multicolumn{1}{c}{$\Theta$,\%} & \multicolumn{1}{c}{} \\
\Xhline{2\arrayrulewidth}
0.0 & 428.0 & 8.1 & 478.1 & 20.7 & 19.1  \\
0.1 & 428.0 & 8.1 & 476.3 & 20.3 & 19.4  \\
0.2 & 428.3 & 8.2 & 475.0 & 19.9 & 19.6  \\
0.3 & 428.9 & 8.3 & 473.3 & 19.5 & 19.8  \\
0.4 & 431.9 & 9.1 & 467.8 & 18.1 & 17.3  \\
0.5 & 434.5 & 9.7 & 464.4 & 17.3 & 15.7  \\
0.6 & 438.2 & 10.7 & 461.7 & 16.6 & 14.6 \\
0.7 & 452.9 & 14.4 & 452.9 & 14.4 & 13.0 \\
\Xhline{2\arrayrulewidth}
\end{tabular}
\end{table}

\newtext{
The application of mechanism $\Mdp$ yields an optimality loss with respect to the solution of the standard, non-private OPF mechanism. For the same setting as in the previous experiment, the out-of-sample empirical distribution of the cost is depicted in Fig. \ref{opt_loss_dist}. Observe that the probability mass is centered at 8.1\% of optimality loss and that the distribution is biased towards smaller optimality losses. Although the probability of the worst-case outcomes is shown very small on out-of-sample, the expected value over 10\% of the worst-case outcomes is at relatively large 20.7\%. The DSO, however, is capable to trade off between the expected and the worst-case optimality loss by substituting the CC-OPF model in mechanism $\Mdp$ by the CVaR-CC-OPF model in \eqref{cost_CVaR_optimization}. 
} For a trade-off parameter $\theta\in[0,1]$, the expected optimality loss in $\varrho=10\%$ of the worst-case scenarios is contrasted with the expected loss in Table \ref{CVaR_results}. For $\theta=0$, the $\text{CVaR}_{10\%}$ significantly exceeds the expected value. However, by increasing $\theta$, the DSO alters the DER dispatch to reduce the worst-case optimality loss at the expense of increasing the expected value. For $\theta\geqslant0.7$, the expected value corresponds to $\text{CVaR}_{10\%}$, thus providing differential privacy at a fixed cost. Eventually, the choice of $\theta$ is driven by the DSO's risk preference. 
\newtext{Finally, the optimality loss can be further reduced by relaxing the feasibility guarantee with larger probabilities $\eta$, though it may result in an increasing out-of-sample violation probability $\hat{\eta}$. 
}

\newtext{
\subsection{Comparison with the Output Perturbation Mechanism}
It remains to compare the proposed privacy-preserving OPF mechanism with the standard, non-adapted to the specifics of OPF problems, output perturbation (OP) mechanism \cite{dwork2014algorithmic}. The functioning of this mechanism is depicted in Algorithm 2: it solves the deterministic OPF problem, perturbs the optimal power flow solution with the random noise, and then finds the feasible DER and substation dispatch that satisfies AC-OPF equations for the perturbed values of power flows. If exists, the mechanism returns an $(\varepsilon,\delta)-$differentially private OPF solution on $\beta-$adjacent datasets, or reports infeasibility otherwise. Observe, unlike the proposed mechanism in Algorithm \ref{alg}, the output perturbation mechanism does not offer feasibility guarantees, because the perturbation step is done independently from the OPF computations. 
}

\newtext{
To compare the two mechanisms, consider the provision of differential privacy for sets of nodes $1:n$, i.e., from 1 to $n$, for which $\beta_{i}\rightarrow 10\%$. By increasing $n$, the amount of noise that the DSO needs to accommodate in the grid increases. Table III summarizes the feasibility statistics for the two algorithms. Observe, even for a single customer, the output perturbation mechanism produces infeasible solutions in 52.1\% of instances, and its performance further reduces in $n$. By optimizing affine functions in \eqref{randomized_variables}, the proposed mechanism, instead, produces feasible OPF instances with a high probability, e.g. 96.7\% for the entire set of customers.}

\begin{algorithm}[t]
  \newtext{{\bf Input:} $D, \varepsilon, \delta, \beta$\\
  Solve $\{\optimal{f_{\ell}^{p}}\}_{\forall \ell\in\set{L}}\;\;\leftarrow\;\text{argmin problem}\;\eqref{det_OPF}$ using $D$ \\
  Sample $\hat{\xi}_{\ell}\sim\mathcal{N}(0,\beta_{\ell}\sqrt{2\text{ln}(1.25/\delta)}/\varepsilon), \forall \ell \in\set{L}$ \\
  Perturb power flows $\hat{f}_{\ell}^{p} = \optimal{f_{\ell}^{p}} + \hat{\xi}_{\ell}, \forall \ell \in\set{L}$\\
  Re-solve problem \eqref{det_OPF} for the fixed $f_{\ell}^{p}=\hat{f}_{\ell}^{p}, \forall \ell \in\set{L}$\\
  \eIf{\normalfont problem \eqref{det_OPF} feasible}{Implement perturbed solution}{Report infeasibility}}
  \caption{Output Perturbation (OP) Mechanism}
  \label{alg_OP_2134}
\end{algorithm}

\begin{table}[]
\centering
\newtext{\caption{Percentage of infeasible OPF instances (5000 samples) [\%]}}
\label{OP_comparison}
\newtext{
\begin{tabular}{llllllll}
\Xhline{2\arrayrulewidth}
\multirow{2}{*}{Mechanism} & \multicolumn{7}{c}{Node set $1:n$ (with non-zero adjacency $\beta_{i}$)} \\
\cline{2-8}
 & 1 & 1:2 & 1:3 & 1:4 & 1:5 & ... & 1:14 \\
\Xhline{2\arrayrulewidth}
OP & 52.1 & 87.0 & 97.9 & 99.7 & 100 & ... & 100 \\
CC-OPF & 0.1 & 0.9 & 0.9 & 1.0 & 1.1 & ... & 3.3 \\
\Xhline{2\arrayrulewidth}
\end{tabular}
}
\end{table}

\section{Conclusions and Future Work}\label{sec_conc}

This paper introduced a differentially private OPF mechanism for distribution grids, which provides formal privacy guarantees for grid customer loads. The mechanism parametrizes OPF variables as affine functions of a carefully calibrated noise to weaken the correlations between grid loads and OPF variables, thus preventing the recovery of customer loads from the voltage and power flow measurements. Furthermore, the mechanism was extended to enable the DSO to control the OPF variance induced by the noise in the computations, providing better practices for systems with more emphasis on component overloads than on operational costs. Finally, the optimality loss induced by the mechanism translates into privacy costs. To minimize the risk of large privacy costs, the mechanism was extended to enable the trade-off between the expected and worst-case performances. 

\newtext{There are several avenues for future work. To understand the impacts of the privacy preservation on distribution electricity pricing, one can explore the connection between DP parameters and distribution locational marginal prices following price decomposition approach from \cite{mieth2019distribution} and \cite{papavasiliou2017analysis}. Alternatively, the coalition game theory can be used to find an adequate privacy cost allocation among customers, similar to game-theoretic reserve cost allocation in \cite{karaca2019enabling}. Finally, the private OPF mechanism has been developed in a technically suitable ecosystem: it builds upon LinDistFlow OPF equations neglecting the effects of power losses, adopts a constant DER power factor, and does not include the control of stochastic DERs. Although these three factors are well-studied in the context of the chance-constrained OPF problems under uncertainty, it remains valid, if not crucial, for future work to explore their effects on the limits of the differential privacy provision in distribution grids.}

\appendix
\newtext{
\subsection{System Response to the Random Perturbation}\label{app:system_response}
The affine dependency of power flows on the random perturbation is obtain by substituting generator response policy \eqref{affine_policy} into OPF equations \eqref{det_OPF_flow}, that is:
\begin{subequations}\label{app_responses}
\begin{align}
    \tilde{f}_{\ell}^{\dag}(\xi)=& d_{\ell}^{\dag}-g_{\ell}^{\dag}(\xi) + \mysum{i\in\set{D}_{\ell}}(d_{i}^{\dag}-g_{i}^{\dag}(\xi))\nonumber\\
    =& d_{\ell}^{\dag} - g_{\ell}^{\dag} - \rho_{\ell}^{\dag}\xi + \mysum{i\in\set{D}_{\ell}}(d_{i}^{\dag}- g_{i}^{\dag} - \rho_{i}^{\dag}\xi)\nonumber\\
    =& d_{\ell}^{\dag} - g_{\ell}^{\dag} + \mysum{i\in\set{D}_{\ell}}(d_{i}^{\dag}- g_{i}^{\dag}) - [\rho_{\ell}^{\dag}\xi + \mysum{i\in\set{D}_{\ell}}\rho_{i}^{\dag}\xi]  \nonumber\\
    \overset{\text{due to (1c)}}{=} & f_{\ell}^{\dag} - \Big[\rho_{\ell}^{\dag}+\mysum{j\in\set{D}_{\ell}}\rho_{j}^{\dag}\Big]\xi, \; \forall \ell \in \set{L} \label{app_response_flow},
\end{align}
where $f_{\ell}^{\dag}$ is the nominal (average) component and the last term is the random flow component. The affine dependency of voltages on the random perturbation is obtain by substituting \eqref{app_response_flow} into voltage drop equation \eqref{det_OPF_voltage}, that is:
\begin{align}
    \tilde{u}_{i}(\xi) =& u_{0} - 2\mysum{\ell \in\set{R}_{i}}(\tilde{f}_{\ell}^{p}(\xi)r_{\ell}+\tilde{f}_{\ell}^{q}(\xi)x_{\ell})\nonumber\\
    =&  u_{0} - 2\mysum{\ell \in\set{R}_{i}}(f_{\ell}^{p}r_{\ell}+f_{\ell}^{q}x_{\ell})  \nonumber\\
    &+2\mysum{\ell \in\set{R}_{i}}\big[r_{\ell}\big(\rho_{\ell}^{p} + \mysum{k\in\set{D}_{\ell}}\rho_{k}^{p}\big) + x_{\ell}\big(\rho_{\ell}^{q} + \mysum{k\in\set{D}_{\ell}}\rho_{k}^{q}\big) \big]\xi\nonumber\\
    \overset{\text{due to (1d)}}{=} & u_{i} + 2\mysum{\ell \in\set{R}_{i}}\big[r_{\ell}\big(\rho_{\ell}^{p} + \mysum{k\in\set{D}_{\ell}}\rho_{k}^{p}\big) + x_{\ell}\big(\rho_{\ell}^{q} + \mysum{k\in\set{D}_{\ell}}\rho_{k}^{q}\big) \big]\xi,
\end{align} 
where $u_{i}$ is the nominal (average) component and the last term is the random voltage component. Together with the response policy in \eqref{affine_policy}, equations \eqref{app_responses} constitute the model of system response to the random perturbation, given in equations \eqref{randomized_variables}. 
\end{subequations}
}

\subsection{Proof of Theorem \ref{th:privacy}}\label{app:proof_privacy}
The proof of Theorem \ref{th:privacy} relies on Lemmas \ref{lemma_lower_bound} and \ref{lemma_flow_load_sensitivity}. The first lemma shows that the standard deviation of power flow related to customer $i$ is at least as much as $\sigma_{i}$. Therefore, by specifying $\sigma_{i}$, the DSO attains the desired degree of randomization.
\begin{lemma}\label{lemma_lower_bound} 
    If OPF mechanism \eqref{model:cc_OPF_ref} returns optimal solution, then $\sigma_{\ell}$ is the lower bound on $\text{std}[\tilde{f}_{\ell}^{p}]$.
\end{lemma}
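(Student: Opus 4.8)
The plan is to compute $\mathrm{Var}[\tilde f_\ell^p]$ directly from the affine representation \eqref{randomized_variables_flow} and to show that the coefficient multiplying $\xi_\ell$ has magnitude exactly one, so that the $\xi_\ell$ component alone already accounts for a variance of $\sigma_\ell^2$.

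First I would use that $\xi\sim\mathcal{N}(0,\Sigma)$ with $\Sigma=\mathrm{diag}(\sigma^2)$, hence the entries $\xi_1,\dots,\xi_{|\set{L}|}$ are mutually independent. Writing the random flow from \eqref{randomized_variables_flow} as $\tilde f_\ell^p(\xi)=f_\ell^p-a_\ell\xi$ with the row vector $a_\ell=\rho_\ell^p+\sum_{j\in\set{D}_\ell}\rho_j^p$, independence gives $\mathrm{Var}[\tilde f_\ell^p]=\sum_{k\in\set{L}}a_{\ell k}^2\sigma_k^2$. Since every term of this sum is nonnegative, it suffices to prove $|a_{\ell\ell}|\ge 1$: then $\mathrm{std}[\tilde f_\ell^p]^2\ge a_{\ell\ell}^2\sigma_\ell^2\ge\sigma_\ell^2$.

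Next I would evaluate $a_{\ell\ell}=\rho_{\ell\ell}^p+\sum_{j\in\set{D}_\ell}\rho_{j\ell}^p$ through $\rho_{j\ell}^p=T_{j\ell}\alpha_{j\ell}$. The structural fact to exploit is that line $\ell$ belongs to the path to the root from node $\ell$ itself and from every node $j\in\set{D}_\ell$ lying downstream of it; therefore line $\ell$ is upstream with respect to each such node, so $T_{j\ell}=-1$ for all $j\in\{\ell\}\cup\set{D}_\ell$. Hence $a_{\ell\ell}=-\big(\alpha_{\ell\ell}+\sum_{j\in\set{D}_\ell}\alpha_{j\ell}\big)$, which is exactly $-\sum_{i\in\set{D}_\ell}\alpha_{i\ell}$ once $\set{D}_\ell$ is read as the set of nodes downstream of line $\ell$. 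Invoking the balancing condition \eqref{policy_balance} — imposed as a constraint of \eqref{model:cc_OPF_ref} via \eqref{cc_OPF_ref_det_con}, so it holds at any returned solution — this sum equals $1$, giving $a_{\ell\ell}=-1$.

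Combining the two steps, $\mathrm{Var}[\tilde f_\ell^p]=\sigma_\ell^2+\sum_{k\ne\ell}a_{\ell k}^2\sigma_k^2\ge\sigma_\ell^2$, and taking square roots yields $\mathrm{std}[\tilde f_\ell^p]\ge\sigma_\ell$. The only delicate part is the index bookkeeping in the third step: one must be consistent about whether $\set{D}_\ell$ denotes the strict descendants of node $\ell$ or all nodes downstream of line $\ell$ (node $\ell$ included), and then match the index set appearing in \eqref{policy_balance}; once this convention is pinned down the cancellation is immediate. Conceptually the computation merely records that, by design of the affine policies \eqref{affine_policies}, the perturbation $\xi_\ell$ is routed in full through line $\ell$.
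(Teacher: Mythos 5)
Your proof is correct and follows essentially the same route as the paper's: isolate the coefficient of $\xi_\ell$ in \eqref{randomized_variables_flow}, use the balancing condition \eqref{policy_balance} (with node $\ell$ counted among the nodes downstream of line $\ell$, as the paper itself implicitly does) to show this coefficient equals $-1$, and use the diagonal covariance so that all other perturbations can only add variance. Writing the full variance as $\sum_k a_{\ell k}^2\sigma_k^2$ is just a slightly more explicit rendering of the paper's argument, which first treats the single perturbation $\xi_\ell$ and then invokes $\Sigma_{\ell,j}=0$ for the remaining terms.
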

\begin{proof}
    Consider a single flow perturbation with $\xi_{\ell}\sim\mathcal{N}(0,\sigma_{\ell}^2)$ and $\xi_{j}=0,\;\forall j\in\set{L}\backslash \ell$. The standards deviation of active power flow \eqref{randomized_variables_flow} in optimum finds as
    \begin{align}
        &\text{std}\Big[\opt{f_{\ell}^{p}} - \Big[\overset{\star}{\rho_{\ell}^{p}}+\mysum{j\in\set{D}_{\ell}}\overset{\star}{\rho_{j}^{p}}\Big]\xi\Big]=\text{std}\Big[\Big[\overset{\star}{\rho_{\ell}^{p}}+\mysum{j\in\set{D}_{\ell}}\overset{\star}{\rho_{j}^{p}}\Big]\xi\Big]\nonumber\\
        &=\text{std}\Big[\mysum{j\in\set{D}_{\ell}}\overset{\star}{\alpha}_{j\ell}\xi_{\ell}\Big]\overset{\eqref{policy_balance}}{=}
        \text{std}\Big[\xi_{\ell}\Big]=
        \sigma_{\ell},
    \end{align}
    where the second to the last equality follows from balancing conditions \eqref{policy_balance}.
    As for any pair $(\ell,j)\in\set{L}$ the covariance matrix returns $\Sigma_{\ell,j}=0$, $\sigma_{\ell}$ is a lower bound on $\text{std}[\tilde{f}_{\ell}^{p}]$ in the optimum for any additional perturbation in the network.
\end{proof}
\begin{remark}
The result of Lemma \ref{lemma_lower_bound} holds independently from the choice of objective function and is solely driven by the feasibility conditions. 
\end{remark}
The second lemma shows that $\beta_{i}\geqslant\Delta_{i}^{\beta}$, i.e., if $\sigma_{i}$ is parameterized by $\beta_{i}$, then $\sigma_{i}$ is also parameterized by sensitivity $\Delta_{i}^{\beta}$.
\begin{lemma}
\label{lemma_flow_load_sensitivity} 
	Let $D$ and $D'$ be two adjacent datasets differing in at most one load $d_i^p$ by at most $\beta_i > 0$. Then, 
	$$
	\Delta_i^\beta = \max_{\ell \in \set{L}} \norm{\M(D)_{|f_\ell^p} - \M(D')_{|f_\ell^p}}_{2} \leqslant \beta_i,
	$$
	where the notation $\M(\cdot)_{|f_\ell^p}$ denotes the value of the optimal active power flow on line $\ell$ returned by the computation $\M(\cdot)$.
\end{lemma}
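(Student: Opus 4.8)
The plan is to bound, line by line, the gap $|\M(D)_{|f_\ell^p}-\M(D')_{|f_\ell^p}|$ between the optimal active flows returned on the two $\beta$-adjacent datasets, and to show it never exceeds $\beta_i$. Write $\Delta := d_i'^p-d_i^p$, so that the adjacency relation gives $|\Delta|\le\beta_i$ and $D,D'$ coincide at every node other than $i$. The structural fact I would lean on is obtained by unrolling the flow equation \eqref{det_OPF_flow} along the radial tree: $f_\ell^p=\sum_{j\in\set{D}_{\ell}\cup\{\ell\}}(d_j^p-g_j^p)$, i.e., the active flow on line $\ell$ equals the net active load of the sub-tree hanging below line $\ell$. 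Thus, for a fixed generation schedule, the flows are affine in the loads with unit coefficients, and moving the single load $d_i^p$ by $\Delta$ shifts exactly those flows whose sub-tree contains node $i$, each by $\Delta$, while all other flows are unaffected.

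First I would take an optimal solution $(g^\star,f^\star,u^\star)$ of D-OPF($D$) and construct a candidate solution for D-OPF($D'$) by absorbing the increment $\Delta$ entirely at the substation: set $\tilde g_0^p=g_0^{p\star}+\Delta$, leave all DER set-points and all reactive quantities untouched, and propagate the induced change through \eqref{det_OPF_root}, \eqref{det_OPF_flow} and \eqref{det_OPF_voltage}. Because node $0$ is the root, it lies in no sub-tree $\set{D}_{\ell}\cup\{\ell\}$, so the only flows that move are those with $i\in\set{D}_{\ell}\cup\{\ell\}$, and each moves by exactly $\Delta$; the reactive load at $i$ and the reactive DER outputs are unchanged, so no reactive flow and no voltage off the root-to-$i$ path is perturbed. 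This candidate satisfies every \emph{equality} of D-OPF($D'$) by construction, uses the large-capacity substation so the generation limits \eqref{det_OPF_gen_limit} still hold, and has $\max_{\ell}|\tilde f_\ell^p-f_\ell^{p\star}|=|\Delta|\le\beta_i$ (with an entirely symmetric construction starting from the optimum of D-OPF($D'$); if absorbing at the substation ever conflicts with \eqref{det_OPF_voltage_limit}, one can instead spread $\Delta$ over several generators as a single-signed recourse $\delta$ with $\sum_j\delta_j=\Delta$, in which case every sub-tree partial sum $\sum_{j\in\set{D}_{\ell}\cup\{\ell\}}\delta_j$ stays between $0$ and $\Delta$, so the resulting flow change $\mathbbm{1}[i\in\set{D}_{\ell}\cup\{\ell\}]\Delta-\sum_{j\in\set{D}_{\ell}\cup\{\ell\}}\delta_j$ still lies in $[-|\Delta|,|\Delta|]$).

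The step I expect to be the crux is converting ``there exists a feasible solution within $\beta_i$ of the $D$-optimum'' into the stated bound on the \emph{optimal} flows, which requires two things. First, the constructed candidate must be shown genuinely feasible for D-OPF($D'$): the equalities and generation bounds are handled above, but the apparent-power limits \eqref{det_OPF_flow_limit} and, where the recourse touches the root-to-$i$ path, the voltage limits \eqref{det_OPF_voltage_limit} are perturbed, so one must argue these inequalities survive — either under a regularity assumption that keeps the perturbed instance in the same feasibility regime, or by choosing the recourse so that the perturbed flows and voltages remain on the feasible side. Second, one needs an optimality-transfer argument: since the recourse only alters generator set-points, the candidate's cost is the $D$-optimum plus $\sum_j c_j\delta_j$, and combining the two symmetric constructions pins down $\mathrm{val}(D')-\mathrm{val}(D)$; invoking uniqueness of the minimizer (from strictly ordered DER costs, or the tie-break rule that defines $\M$), the unique optimizer of D-OPF($D'$) must then be the constructed candidate, yielding $\Delta_i^\beta=\max_{\ell\in\set{L}}|\M(D)_{|f_\ell^p}-\M(D')_{|f_\ell^p}|\le|\Delta|\le\beta_i$. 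Everything else reduces to bookkeeping on the radial power-balance equations; the feasibility-preservation of the inequality limits is where the genuine work — and any implicit assumption — resides.
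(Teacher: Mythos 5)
Your construction only shows that D-OPF($D'$) \emph{admits a feasible point} whose flows lie within $|\Delta|\le\beta_i$ of the optimum of D-OPF($D$); the lemma, however, is a statement about the distance between the two \emph{optima}, $\M(D)_{|f_\ell^p}$ and $\M(D')_{|f_\ell^p}$. The ``optimality-transfer'' step you propose to bridge this is a non sequitur: knowing that your candidate has cost equal to the $D$-optimum plus $\sum_j c_j\delta_j$, and even pinning down $\mathrm{val}(D')-\mathrm{val}(D)$ by the symmetric construction, does not make the candidate optimal, and uniqueness of the minimizer of D-OPF($D'$) does not force that minimizer to coincide with a particular feasible point you happened to build. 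Absorbing $\Delta$ entirely at the substation is in general strictly suboptimal (the substation carries cost $c_0$ in \eqref{det_OPF_obj}, and a cheaper DER with spare capacity would be used instead), and with a linear objective there is no strong-convexity or Lipschitz-stability argument that converts ``a nearby feasible point with nearby cost exists'' into ``the optimizer is nearby.'' Indeed, the real content of the lemma is exactly the possibility you leave open: the optimal \emph{redispatch} on $D'$ could in principle move several DERs (e.g., one up by $2\Delta$, another down by $\Delta$, because a binding limit shifts), in which case some subtree flow would change by more than $\beta_i$; nothing in your argument excludes this.

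The paper closes this gap by working directly with the optimal solution as a function of the load: using \eqref{det_OPF_flow} it writes $\partial f_\ell^p/\partial d_\ell^p = 1 - \partial g_\ell^p/\partial d_\ell^p - \sum_{i\in\set{D}_\ell}\partial g_i^p/\partial d_\ell^p$ evaluated at the optimum, argues that the optimal DER responses $\partial g^p/\partial d_\ell^p$ are non-negative (so the sensitivity is maximized, at value $1$, when the downstream DERs are at their upper bounds and do not respond), and then multiplies by $\beta_i$. So the paper's proof is a sensitivity bound on the optimal flow itself, whereas yours bounds the distance to a constructed feasible point --- the former is what the statement requires. If you want to salvage your route, you would need an additional structural argument about how the optimal dispatch itself responds to a single-load perturbation (monotonicity of the optimal DER outputs in the load, or an explicit parametric/basis analysis of the LP), which is essentially the ingredient the paper invokes; the feasibility bookkeeping you did, while correct, is not the crux.
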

\begin{proof} 
    Let $\opt{f_\ell^p}$ be the optimal solution for the active power flow in line $\ell$ obtained on input dataset $D = (d_1^p, \ldots, d_n^p)$. 
    From OPF equation \eqref{det_OPF_flow}, it can be written as
    $$
    	\opt{f_\ell^p}=d_{\ell}^{p} - \opt{g_{\ell}^{p}} + \mysum{i\in\set{D}_{\ell}}(d_{i}^{p}-\opt{g_{i}^{p}}),
    $$
    which expresses the flow as a function of the downstream loads and the optimal DER dispatch. 
    A change in the active load $d_\ell^p$ translates into a change of power flow as 
    \begin{align}
    \frac{\partial \opt{f_\ell^p}}{\partial d_{\ell}^{p}} &= 
    \underbrace{
        \frac{\partial d_{\ell}^{p}} {\partial d_{\ell}^{p}}}_{1} 
        - 
        \frac{\partial\opt{g_\ell^p}}{\partial d_{\ell}^{p}} 
        + 
        \mysum{i\in\set{D}_{\ell}} 
        \Big(
        \underbrace{
        \frac{\partial d_{i}^{p}} {\partial d_{\ell}^{p}}}_{0}
        -
        \frac{\partial\opt{g_i^p}} {\partial d_{\ell}^{p}}
        \Big)\nonumber\\
        &= 
        1 - \frac{\partial\opt{g_\ell^p}}{\partial d_{\ell}^{p}}
          - \mysum{i\in\set{D}_{\ell}}
            \frac{\partial\opt{g_i^p}} {\partial d_{\ell}^{p}},\label{eq_ap_2_summ}
    \end{align}
    where the last two terms are always non-negative due to convexity of model \eqref{det_OPF}.  The value of \eqref{eq_ap_2_summ} attains maximum when
    \begin{equation}
    \label{eq:app_2}
        \opt{g_{k}^{p}} = \overline{g}_{k}^{p}
        \mapsto 
        \frac{\partial \opt{g_{k}^{p}}} {\partial d_{\ell}^{p}} 
        = 0,
        \;\; \forall k \in \{\ell\} \cup \set{D}_{\ell}.
    \end{equation}

    Therefore, by combining \eqref{eq_ap_2_summ} with \eqref{eq:app_2} we obtain the maximal change of power flows as
    $$
    \frac{\partial \opt{f_{\ell}^{p}}} {\partial d_{\ell}^{p}}= 1.
    $$
    Since the dataset adjacency relation considers loads $d_\ell^p$ that differ by at most $\beta_\ell$, it suffices to multiply the above by $\beta_\ell$ to attain the result. It finds similarly that for a $\beta_i$ change of any load $i\in\set{N}$, all network flows change by at most $\beta_i$.
\end{proof}

\begin{proof}[Proof of Theorem \ref{th:privacy}]
Consider a customer at non-root node $i$. Mechanism $\tilde{{\cal M}}$
induces a perturbation on the active power flow $f_{i}^{p}$ by a random variable $\xi_{i}\sim\mathcal{N}(0,\sigma_{i}^{2})$. The randomized active power flow $f_{i}^{p}$ is then given as follows:
\begin{align*} 
    \tilde{f}_{i}^{p} &= 
    \opt{f_{i}^{p}} - \big[\opt{\rho_{i}^{p}} 
    + \mysum{j\in\set{D}_{i}} \opt{\rho_{j}^{p}}\big]\xi,
\end{align*} 
where $\star$ denotes optimal solution for optimization variables. For privacy parameters $(\varepsilon,\delta)$, the mechanism specifies 
$$
    \sigma_{i} \geqslant \beta_{i}\sqrt{2\text{ln}(1.25/\delta)}/\varepsilon, \;\forall i\in\set{L}.
$$
As per Lemma \ref{lemma_lower_bound}, we know that $\sigma_{i}$ is the lower bound on the standard deviation of power flow $f_{\ell}^{p}$. From Lemma \ref{lemma_flow_load_sensitivity} we also know that the sensitivity $\Delta_{i}^{\beta}$ of power flow in line $i$ to load $d_{i}^{p}$  is upper-bounded by $\beta_{i}$, so we have
\begin{align*}
    \label{power_flow_min_std}
    \text{std}[\tilde{f}_{i}^{p}] &\geqslant\sigma_{i}\geqslant\Delta_{i}^{\beta}\sqrt{2\text{ln}(1.25/\delta)}/\varepsilon.
\end{align*} 
Since the randomized power flow follow is now given by a Normal distribution with the standard deviation $ \text{std}[\tilde{f}_{i}^{p}]$ as above, by Theorem \ref{def_gaus_mech}, mechanism $\tilde{{\cal M}}$ satisfies $(\varepsilon,\delta)$-differential privacy for each grid customer up to adjacency parameter $\beta$.
\end{proof}

\balance
\bibliographystyle{IEEEtran}
\bibliography{references}

\endgroup
\end{document}


\title{Differentially Private Optimal Power Flow \\ for Distribution Grids}

\maketitle
\IEEEpeerreviewmaketitle
\begingroup
\allowdisplaybreaks

\appendix

The first result shows that the standard deviation of each perturbed power flow $\tilde{f}_\ell$, returned by an application of the chance-constrained OPF operator $\OPF^{CC}$ is lower bounded by $\sigma_\ell$.

\begin{lemma}\label{lemma_lower_bound} 
    If chance-constrained OPF Model \ref{model:cc_opf_ref} returns optimal solution, then $\sigma_{\ell}$ is the lower bound on $\text{Std}[\tilde{f}_{\ell}^{p}]$.
\end{lemma}
\begin{proof}
    Consider a single flow perturbation with $\xi_{\ell}\sim\mathcal{N}(0,\sigma_{\ell}^2)$ and $\xi_{j}=0,\;\forall j\in\set{L}\backslash \ell$. The standards deviation of active power flow \eqref{randomized_variables_flow} in optimum finds as
    \begin{align}
        &\text{Std}\Big[\fopt_{\ell}^{p} - \Big[\overset{\star}{\rho}_{\ell}^{p}+\mysum{j\in\set{D}_{\ell}}\overset{\star}{\rho}_{j}^{p}\Big]\xi\Big]=\text{Std}\Big[\Big[\overset{\star}{\rho}_{\ell}^{p}+\mysum{j\in\set{D}_{\ell}}\overset{\star}{\rho}_{j}^{p}\Big]\xi\Big]=\nonumber\\
        &\text{Std}\Big[\mysum{j\in\set{D}_{\ell}}\overset{\star}{\alpha}_{j\ell}\xi_{\ell}\Big]\overset{\eqref{policy_balance}}{=}
        \text{Std}\Big[\xi_{\ell}\Big]=
        \sigma_{\ell}\label{std_lower_bound}
    \end{align}
    where the second to the last equality follows from balancing condition \eqref{policy_balance}.
    As for any pair $(\ell,j)\in\set{L}$ covariance matrix returns $\Sigma_{\ell,j}=0$, \eqref{std_lower_bound} is a lower bound on $\text{Std}[\tilde{f}_{\ell}^{p}]$ in the optimum for any additional perturbation in the network. 
\end{proof}

The next results shows that the sensitivity of the optimal power flow to a network load $d_{\ell}^{i}$ is upper-bounded.
\begin{lemma}
\label{lemma_flow_load_sensitivity} 
	Let $D$ and $D'$ be two adjacent datasets differing in at most one load $d_i^p$ by at most quantity $\beta_i \geq 0$. Then, 
	$$
	\Delta_i^\beta = \max_{\ell \in \set{L}} | \OPF(D)_{|f_\ell^p} - \OPF(D')_{|f_\ell^p}| \leq \beta_i
	$$
	where the notation $\OPF(\cdot)_{|f_\ell^p}$ denotes the value of the optimal active power flow on line $\ell$ returned by the computation ($\OPF(\cdot)$).
\end{lemma}

\begin{proof}
    Let $\fopt_\ell^p$ denote the optimal active power flow on line $\ell$ on input $D = (d_1^p, \ldots, d_n^p)$. 
    By Equation \eqref{det_OPF_flow}, $\fopt_\ell^p$ can be written as
    $$
    	\fopt_{\ell}^{p}=d_{\ell}^{p} - \gopt_{\ell}^{p} + \mysum{i\in\set{D}_{\ell}}(d_{i}^{p}-\gopt_{i}^{p}),
    $$
    which expresses the flow $f_\ell$ as a function of the downstream loads and the optimal DER dispatch. 
    A unit change $\partial d_{\ell}^{p}$ in the active load $d_\ell^p$ translates into a change of power flow described as 
    \begin{subequations}
    \label{eq:ap_1}
    \begin{align}
    \frac{\partial \fopt_{\ell}^{p}} {\partial d_{\ell}^{p}} &= 
    \underbrace{
        \frac{\partial d_{\ell}^{p}} {\partial d_{\ell}^{p}}}_{1} 
        - 
        \frac{\partial\gopt_{\ell}^{p}}{\partial d_{\ell}^{p}} 
        + 
        \mysum{i\in\set{D}_{\ell}} 
        \Big(
        \underbrace{
        \frac{\partial d_{i}^{p}} {\partial d_{\ell}^{p}}}_{0}
        -
        \frac{\partial\gopt_{i}^{p}} {\partial d_{\ell}^{p}}
        \Big) \\ 
        &= 
        1 - \frac{\partial\gopt_{\ell}^{p}}{\partial d_{\ell}^{p}}
          - \mysum{i\in\set{D}_{\ell}}
            \frac{\partial\gopt_{i}^{p}} {\partial d_{\ell}^{p}},
    \end{align}
    \end{subequations}
    where the last two terms are always non-negative due to convexity of Model \ref{model:det_opf}. 
    The first term in the summation is 0 as no loads, except for $d_\ell$ changes.  

    In the worst case, we have that
    \begin{equation}
    \label{eq:app_2}
        \gopt_{k}^{p} = \overline{g}_{k}^{p}
        \mapsto 
        \frac{\partial \gopt_{k}^{p}} {\partial d_{\ell}^{p}} 
        = 0,
        \;\; \forall k \in \{\ell\} \cup \set{D}_{\ell},
    \end{equation}

    Combining \eqref{eq:ap_1} with \eqref{eq:app_2} results in:
    $$
    \frac{\partial \fopt_{\ell}^{p}} {\partial d_{\ell}^{p}}= 1.
    $$
    Since the dataset adjacency relation considers loads $d_\ell^p$ that differ by at most $\beta_\ell$, it suffices to multiply the above by $\beta_\ell$ to attain the result. 

    The above implies that for a $\beta_i$ change on a load on node $i$, all upstream optimal power flows will change by at most $\beta_i$. 
    In turn, the maximal change of optimal power flow for a given node $i$ is $\beta_i$.
\end{proof}

Since the privacy model adopted in this paper uses a metric space with $\beta= (\beta_1, \ldots, \beta_n)$ to define the adjacency relation between two datasets, the sensitivity of the power flow $\fopt_\ell^p$ to load $d_{\ell}^p$ is $\beta_\ell$.

It remain to show that mechanism $\OPF^CC$ achieves differential privacy. 

\begin{theorem}
Let $\epsilon > 0$ and $\delta \in [0,1]$ and consider a vector $\beta = (\beta_1, \ldots, \beta_n)$ with $\beta_i >0$ ($i \in [n]$).
Mechanism $\OPF^{CC}$ is $(\epsilon, \delta)$-differentially private for $\beta$-adjacent load datasets.
\end{theorem}

\begin{proof}

Consider a customer at non-root node $i$. Mechanism $\OPF^{CC}$
induces a perturbation on the active power flow $f_{i}^{p}$ by a random variable $\xi_{i}\sim\mathcal{N}(0,\sigma_{i}^{2})$, and dispatches available DERs according to OPF model \eqref{cc_OPF_reformulated}. 
%
The randomized active power flow $f_i$ is then given as follows:
\begin{align*} 
    \tilde{f}_{i}^{p} &= 
    \opt{f_{i}^{p}} - \big[\opt{\rho_{i}^{p}} 
    + \mysum{j\in\set{D}_{i}} \opt{\rho_{j}^{p}}\big]\xi
%
\end{align*} 
where quantities $\opt{(\cdot)}$ fix the optimization variables to the optimal solution of the chance-constrained program $\tilde{\OPF}^{CC}$ defined in \eqref{model:cc_OPF_ref}. 

Notice that, for fixed $\epsilon$ and $\delta$ values, for each $i \in \set{N}$, mechanism $\OPF^{CC}$ ensures that
$$
    \sigma_{i} \geq \frac{\beta_i \sqrt{2\text{ln}(1.25/\delta)}}{\epsilon}
$$
as illustrated in \eqref{cc_OPF_reformulated}.
It follows that:
\begin{align}
    \label{power_flow_min_std}
    \text{Std}[\tilde{f}_{i}^{p}] &= 
        \text{Std}\Big[\big[\opt{\rho_{i}^{p}}
            + \mysum{j\in\set{D}_{i}}
                \opt{\rho_{j}^{p}}\big]\xi\Big]\\
        &> \frac{\Delta_{i}^{\beta}\sqrt{2\text{ln}(1.25/\delta)}}{\epsilon}
        \geq \sigma_i.
\end{align} 

The above ensures that the standard deviation of the active power flow $\tilde{f}_{i}^{p}$ is lower bounded by $\sigma_i$. Since the power flow follow a Normal distribution centered in $\opt{f^p_i}$ and with standard deviation as above, by Theorem \ref{def_gaus_mech}, mechanism $\OPF^{CC}$ satisfies $(\epsilon,\delta)$-differential privacy. 
\end{proof}

\endgroup